\newcommand*{\cball}[2]{B_{#2}(#1)}
\newcommand*{\defeq}{\coloneqq}
\newcommand*{\defterm}{\emph}%{\textbf}
\newcommand*{\Naturals}{\mathbb{N}}
\newcommand*{\one}{\mathds{1}}
\newcommand*{\prob}[1]{\mathscr{P}(#1)}
\newcommand*{\quark}{\setbox0\hbox{$x$}\hbox to\wd0{\hss$\cdot$\hss}}
\newcommand*{\rd}{\mathrm{d}}
\newcommand*{\Reals}{\mathbb{R}}
\DeclareMathOperator*{\supp}{supp}
\renewcommand*{\emptyset}{\varnothing}
\renewcommand*{\geq}{\geqslant}
\renewcommand*{\leq}{\leqslant}
\newcommand*{\crcdf}[2]{\mu(\cball{#1}{#2})}
\newcommand*{\crcdfm}[3]{#1(\cball{#2}{#3})}
\theoremstyle{plain}
\newtheorem{theorem}{Theorem}[section]
\newtheorem{proposition}[theorem]{Proposition}
\newtheorem{lemma}[theorem]{Lemma}
\newtheorem{corollary}[theorem]{Corollary}
\theoremstyle{definition}
\newtheorem{definition}[theorem]{Definition}
\newcommand{\absval}[1]{\lvert #1 \rvert}
\newcommand{\innerprod}[2]{\langle #1 , #2 \rangle}
\newcommand{\norm}[1]{\lVert #1 \rVert}
\newcommand{\Norm}[1]{\left\Vert #1 \right\Vert}
\newcommand{\Set}[2]{\left\{ #1 \,\middle\vert\, #2 \right\}}
\numberwithin{equation}{section}
\numberwithin{figure}{section}
\numberwithin{table}{section}
\newcommand*{\arXiv}[1]{\bgroup\color{blue}\href{https://arxiv.org/abs/#1}{arXiv:#1}\egroup}
\newcommand*{\doi}[1]{\bgroup\color{blue}\href{https://doi.org/#1}{doi:#1}\egroup}
\newcommand*{\email}[1]{\bgroup\color{blue}\href{mailto:#1}{#1}\egroup}
\renewcommand*{\url}[1]{\bgroup\color{blue}\href{#1}{#1}\egroup}
\setlist[enumerate]{nosep}
\setlist[itemize]{nosep}
\let\oldtitle\title
\renewcommand{\title}[1]{\oldtitle{#1}\newcommand{\theshorttitle}{#1}}
\newcommand{\shorttitle}[1]{\renewcommand{\theshorttitle}{#1}}
\let\oldauthor\author
\renewcommand{\author}[1]{\oldauthor{#1}\newcommand{\theshortauthor}{#1}}
\newcommand{\theabstract}[1]{\par\bgroup\noindent\textbf{\textsf{Abstract.}} #1\egroup}
\newcommand{\thekeywords}[1]{\par\smallskip\bgroup\noindent\textbf{\textsf{Keywords.}}\newcommand{\and}{ $\bullet$ } #1\egroup}
\newcommand{\themsc}[1]{\par\smallskip\bgroup\noindent\textbf{\textsf{2020 Mathematics Subject Classification.}}\newcommand{\and}{ $\bullet$ } #1\egroup}
\newcommand{\theshortmsc}[1]{\par\smallskip\bgroup\noindent\textbf{\textsf{2020 MSC.}}\newcommand{\and}{ $\bullet$ } #1\egroup}
\newcommand*{\affilref}[1]{\ref{affiliation#1}}
\newcommand*{\affiliation}[3]{
	\footnotetext[#1]{\label{affiliation#2}#3}
}
\title{Strong maximum a posteriori estimation in Banach spaces with Gaussian priors%
}
\shorttitle{Strong MAP estimation in Banach spaces with Gaussian priors%
}
\author{
	Hefin Lambley\textsuperscript{\affilref{Warwick}}%
}
\date{\today}
\begin{document}

\maketitle
\affiliation{1}{Warwick}{Mathematics Institute, University of Warwick, Coventry, CV4 7AL, United Kingdom\newline (\email{hefin.lambley@warwick.ac.uk})
}

\begin{abstract}\small
	\theabstract{This article shows that a large class of posterior measures that are absolutely continuous with respect to a Gaussian prior have strong maximum a posteriori estimators in the sense of Dashti et al.\ (\emph{Inverse Probl.}\ \textbf{29}:095017, 2013). 
This result holds in any separable Banach space and applies in particular to nonparametric Bayesian inverse problems with additive noise. 
When applied to Bayesian inverse problems, this significantly extends existing results on maximum a posteriori estimators by relaxing the conditions on the log-likelihood and on the space in which the inverse problem is set.
}
	\thekeywords{Bayesian nonparametrics%
\and
Bayesian inverse problems%
\and%
maximum a posteriori estimation%
\and%
modes of probability measures%
}
	\theshortmsc{28C20% Measure and integration > Set functions and measures on spaces with additional structure > Set functions and measures and integrals in infinite-dimensional spaces (Wiener measure, Gaussian measure, etc.) 
\and%
60B11% Probability theory and stochastic processes > Probability theory on algebraic and topological structures > Probability measures on linear topological spaces
\and%
62F10% Statistics > Parametric inference > Point estimation
\and%
62F15% Statistics > Parametric inference > Bayesian inference
\and%
62G05% Statistics > Nonparametric inference > Nonparametric estimation
}
\end{abstract}

\section{Introduction}
\label{sec:introduction}

Nonparametric Bayesian models --- which have infinite-dimensional parameters such as functions --- are increasingly popular in modern statistical practice.
For inverse problems, the need for prior information to overcome ill-posedness motivates the use of a Bayesian approach, and the desire for algorithms consistent at every resolution makes the nonparametric approach \citep[as advocated by][]{Stuart2010} very appealing.
One challenge in nonparametric Bayesian inference is that the posterior is a probability distribution on an infinite-dimensional space, making it difficult to analyse and interpret.

This article studies \emph{maximum a posteriori (MAP) estimation} in nonparametric Bayesian inverse problems.
A MAP estimator is a \emph{mode} of the posterior:
a summary by a ``most likely'' point under the measure.
The usual definition of a MAP estimator as a maximiser of the Lebesgue density is not available when the posterior is a measure on an infinite-dimensional parameter space, so it is common to define modes as the centres of metric balls with asymptotically maximal probability as proposed by \citet{DashtiLawStuartVoss2013}.

This definition allows modes of probability measures to be studied in very general settings, but we restrict attention to posterior measures arising in nonparametric Bayesian inverse problems.
In particular, we study the case that $\mu^y$ is a posterior measure on a separable Banach space $X$ which is absolutely continuous with respect to a Gaussian prior $\mu_0$ and has Radon--Nikodym derivative
\begin{equation}
	\label{eq:Bayesian_posterior}
	\mu^y(\rd x) = \exp\bigl(-\Phi(x)\bigr) \,\mu_0(\rd x).
\end{equation}
The \emph{potential} $\Phi \colon X \to \Reals$ is determined by the structure of the problem of interest and is essentially the negative log-likelihood of the statistical model.
Any map $\Phi$ satisfying mild regularity conditions (\Cref{thm:Phi_normalisable}) yields a well-defined probability measure $\mu^{y}$.

A classical example giving rise to such a posterior is the nonlinear inverse problem of inferring a parameter $x \in X$, which is typically a function, from a noisy observation $y \in Y$ given a Gaussian prior $\mu_0$ for $x$, with
\begin{equation}
	\label{eq:additive-noise_BIP}
	y = \mathcal{G}(x) + \xi.
\end{equation}
The \defterm{observation operator} $\mathcal{G} \colon X \to Y$ is a measurable map relating the unknown $x \in X$ with the idealised observation $\mathcal{G}(x) \in Y$, which is often assumed to have finite dimension.
This observation is then corrupted by the additive random noise $\xi$ taking values in $Y$.
Under appropriate regularity conditions on $\mathcal{G}$ and $\xi$, the posterior $\mu^y$ for the conditional distribution $x \mid y$ is given by Bayes' rule and has the form \eqref{eq:Bayesian_posterior} for some potential $\Phi(\quark; y) \colon X \to \Reals$ (\Cref{thm:Bayes}).

In the Bayesian inverse problems literature, \citet{DashtiLawStuartVoss2013} developed the notion of a \defterm{strong mode} (\Cref{def:mode}) to define MAP estimators in the nonparametric setting and proved that strong modes exist when $X$ is a separable Hilbert space under mild assumptions on the potential $\Phi$.
They also showed that strong modes coincide with minimisers of an Onsager--Machlup (OM) functional (\Cref{def:OM_functional}) for the posterior in this setting, connecting their approach with previous work on most-likely paths of diffusion processes. 
For Bayesian inverse problems with additive noise as in \eqref{eq:additive-noise_BIP}, the OM functional can be viewed as a Tikhonov-regularised misfit functional, so the variational solution to an inverse problem --- that is, the minimiser of the Tikhonov functional --- can be viewed as a MAP estimator for a fully Bayesian approach.
This connection is a significant driver for the development of the nonparametric mode theory described here.

As pointed out by \citet{KlebanovWacker2022}, although \citet{DashtiLawStuartVoss2013} stated their results in the Banach setting, technical complications limit their proof strategy to Hilbert spaces, and some additional results are needed to complete the proof even in the Hilbert case, as described by \citet{Kretschmann2019}.

Recent work by Klebanov and Wacker extended the existence result to the sequence spaces $X = \ell^p(\Naturals; \Reals)$, $1 \leq p < \infty$, for Gaussian priors with \emph{diagonal} covariance structure with respect to the canonical basis, i.e.\ $\mu_0 = \bigotimes_{n \in \Naturals} N(0, \sigma_n^{2})$ for some $(\sigma_n)_{n \in \Naturals} \in \ell^{p}(\Naturals; \Reals)$.

This article proves the existence of strong modes for posteriors of the form \eqref{eq:Bayesian_posterior} defined on a separable Banach space with any Gaussian prior, as originally claimed by \citet[Theorem~3.5]{DashtiLawStuartVoss2013}, and shows that strong modes are equivalent in this setting to the other types of small-ball modes present in the literature: the weak mode (\Cref{def:mode}) and the generalised strong mode (\Cref{def:generalised_mode}).

\begin{theorem}
	\label{thm:main}
	Let $X$ be a separable Banach space equipped with a centred nondegenerate Gaussian prior $\mu_0$.
	Let $\mu^y$ be the corresponding Bayesian posterior of the form \eqref{eq:Bayesian_posterior} for some continuous potential $\Phi \colon X \to \Reals$, and suppose that, for each $\eta > 0$, there exists $K(\eta) \in \Reals$ such that
	\begin{equation} \label{eq:Phi_lower_bound}
		\Phi(x) \geq K(\eta) - \eta\|x\|_X^{2} \text{~~for all $x \in X$.}
	\end{equation}
	Then:
	\begin{enumerate}[label=(\alph*)]
		\item $\mu^y$ has a strong mode, i.e.\ a strong MAP estimator, and any strong mode lies in the Cameron--Martin space of $\mu_0$;
		\item strong modes, generalised strong modes, weak modes and minimisers of an OM functional for $\mu^y$ coincide.
	\end{enumerate}
\end{theorem}

The conditions imposed on the potential are weaker than those used by \citet{DashtiLawStuartVoss2013} and \citet{KlebanovWacker2022}, who assumed that the potential was globally bounded below and locally Lipschitz.
As pointed out by \citet{Kretschmann2019,Kretschmann2022}, a global lower bound on $\Phi$ excludes Bayesian inverse problems with observations corrupted by additive white noise or Laplacian noise, and the proof of \citet{DashtiLawStuartVoss2013} can be extended to handle these cases using a less restrictive lower bound.
These cases can also be treated under the yet weaker conditions used here, which are similar to the assumptions used by \citet[Assumption~2.6]{Stuart2010} in developing a well-posedness theory for nonparametric Bayesian inverse problems.

\subsection{Outline} 
\Cref{sec:notation} defines the small-ball modes used in this paper in the general setting of a metric space and states some essential results, including the strong--weak dichotomy (\Cref{lem:all_weak_or_strong}) which appears to be new to the literature.
This section also recalls properties of Gaussian measures used throughout the article and briefly outlines the motivating application of Bayesian inverse problems.

\Cref{sec:small-ball_Gaussian_Banach} states the main estimate (\Cref{prop:CM_bound}) needed to prove \Cref{thm:main}, which can be viewed as an analogue of the explicit Anderson inequality of \citet[Lemma~3.6]{DashtiLawStuartVoss2013}.
This is then used to establish the $M$-property for Gaussian measures on a separable Banach space (\Cref{cor:Gaussian_measure_M-prop}), which was until now known rigorously only for special cases such as separable Hilbert spaces and $\ell^{p}$ spaces equipped with diagonal Gaussian measures.

\Cref{sec:map_estimators} uses the tools developed in the previous section to study MAP estimators for Bayesian posteriors of the form \eqref{eq:Bayesian_posterior}.
First, it states a short proof for the existence of weak modes using the $M$-property.
Then, the bound in \cref{prop:CM_bound} is used to show that any \defterm{asymptotic maximising family} (\Cref{def:AMF}) for the posterior has a limit point (\Cref{lem:AMF_has_limit_point}).
\Cref{lem:AMF_limit_point_is_strong} shows that such a point must be a strong mode, extending a previous proof of \citet{KlebanovWacker2022} to the Banach case, and this completes the proof of \cref{thm:main}.

\Cref{sec:consistency} studies consistency theory for MAP estimators of Bayesian inverse problems of the type \eqref{eq:additive-noise_BIP}.
Using \cref{thm:main}, the consistency results of \citet{DashtiLawStuartVoss2013} are extended to apply in any separable Banach space $X$ (\cref{thm:consistency_infinite-data_limit}, \cref{thm:consistency_small-noise_limit}).

\Cref{sec:conclusion} gives some concluding remarks and suggests directions for future research.

\section{Preliminaries and related work}
\label{sec:notation}

For most of the paper, $X$ will be a separable real Banach space, although some definitions and preliminary results in this section will be given in the more general case that $X$ is a metric space.
In any metric space, the closed ball of radius $r$ will be denoted $\cball{x}{r}$.
We consider only Borel measures and denote the set of Borel probability measures on $X$ by $\prob{X}$.
When $X$ is separable, the topological support
\begin{equation*}
	\supp(\mu) \defeq \Set{x \in X}{\crcdf{x}{r} > 0 \text{ for all } r > 0}
\end{equation*}
is nonempty \citep{AliprantisBorder2006};
this ensures that the quantity $M_{r}$ defined in \eqref{eq:M_r} is strictly positive for all $r > 0$.

\subsection{Mode theory}
\label{subsection:modes}

As mentioned in the introduction, the small-ball mode theory has been developed largely in the Bayesian inverse problems literature.
Strong modes were proposed by \citet{DashtiLawStuartVoss2013}, and weak modes were later suggested by \citet{HelinBurger2015} as a more convenient definition when connecting MAP estimators with variational solutions to inverse problems.
Following \citet{AyanbayevKlebanovLieSullivan2022a}, we consider only \defterm{global} weak modes in this article.

\begin{definition}
	\label{def:mode}
	Let $X$ be a metric space and let $\mu \in \prob{X}$.
	A \defterm{weak mode} of $\mu$ is any point $x^{\star} \in \supp(\mu)$ such that, for all $x \in X$,
	\begin{equation} \label{eq:weak_mode}
		\limsup_{r \to 0} \frac{\crcdf{x}{r}}{\crcdf{x^{\star}}{r}} \leq 1.
	\end{equation}
	Suppose also that $X$ is separable. 
	Then a \defterm{strong mode} of $\mu$ is any point $x^{\star} \in \supp(\mu)$ such that
	\begin{equation} \label{eq:M_r}
		\lim_{r \to 0} \frac{\crcdf{x^{\star}}{r}}{M_r} = 1,~~~M_r \defeq \sup_{x \in X} \crcdf{x}{r}.
	\end{equation}
\end{definition}

The modes of a posterior measure $\mu^y$ will also be called \emph{MAP estimators}.
The difference between the two definitions \eqref{eq:weak_mode} and \eqref{eq:M_r} amounts to the order in which the supremum is taken:
a weak mode must have asymptotically greater mass when compared to every other point individually, whereas a strong mode must asymptotically have the supremal ball mass.
All strong modes are weak modes, because if $x^{\star}$ is a strong mode and $x \in X$, then
\begin{equation*}
	\limsup_{r \to 0} \frac{\crcdf{x}{r}}{\crcdf{x^{\star}}{r}} \leq \lim_{r \to 0} \frac{M_r}{\crcdf{x^{\star}}{r}} = 1.
\end{equation*}
\citet{LieSullivan2018} proved that the converse may be false:
there exist measures which have only weak modes and no strong modes.
While the literature on modes largely treats ``strong'' or ``weak'' as a property of the mode itself, one should really think of ``strong'' or ``weak'' as a global regularity condition on the measure, because either all modes of a measure are strong or none of them are strong, as the following result shows.

\begin{lemma}[Strong--weak dichotomy for modes]
	\label{lem:all_weak_or_strong}
	Let $X$ be a separable metric space.
	If $\mu \in \prob{X}$ has a strong mode, then all weak modes of $\mu$ are strong modes.
\end{lemma}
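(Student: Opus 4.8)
The plan is to show that any weak mode $w$ inherits the defining property of the given strong mode $x^{\star}$ by chaining two comparisons through $x^{\star}$. Since $x^{\star}$ is a strong mode we have $\crcdf{x^{\star}}{r}/M_r \to 1$ as $r \to 0$, and since $w$ is a weak mode we are free to test its defining inequality against the single point $x = x^{\star}$. Because $w, x^{\star} \in \supp(\mu)$, every ball centred at $w$ or $x^{\star}$ has strictly positive mass, so all the ratios below are well-defined positive reals and $M_r \in (0,1]$; this is the only place the support hypothesis is used.

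First I would record the trivial upper bound $\crcdf{w}{r}/M_r \le 1$, which holds for every $r > 0$ by the definition of $M_r$ as a supremum, giving $\limsup_{r\to 0} \crcdf{w}{r}/M_r \le 1$. The substance of the argument is therefore the matching lower bound on the $\liminf$. For this I would factor
\[
	\frac{\crcdf{w}{r}}{M_r} = \frac{\crcdf{w}{r}}{\crcdf{x^{\star}}{r}} \cdot \frac{\crcdf{x^{\star}}{r}}{M_r}.
\]
The weak-mode inequality evaluated at $x = x^{\star}$ reads $\limsup_{r\to 0} \crcdf{x^{\star}}{r}/\crcdf{w}{r} \le 1$, equivalently $\liminf_{r\to 0} \crcdf{w}{r}/\crcdf{x^{\star}}{r} \ge 1$, so the first factor has $\liminf$ at least $1$. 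The second factor converges to $1$ by the strong-mode property of $x^{\star}$. Taking $\liminf$ of the product then yields $\liminf_{r\to 0} \crcdf{w}{r}/M_r \ge 1$, and together with the upper bound this forces $\crcdf{w}{r}/M_r \to 1$, i.e.\ $w$ is a strong mode.

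I expect the only point requiring care to be the passage of $\liminf$ through the product. The identity $\liminf_{r\to 0}(a_r b_r) = \bigl(\liminf_{r\to 0} a_r\bigr)\,\lim_{r\to 0} b_r$ used above is valid precisely because $b_r = \crcdf{x^{\star}}{r}/M_r$ has a genuine, strictly positive limit (not merely a $\limsup$/$\liminf$ bound), and because positivity of every ball mass keeps all factors well-defined and nonnegative. One can make this fully rigorous either by the standard squeezing argument for products with a convergent positive factor, or by observing that $a_r = \crcdf{w}{r}/\crcdf{x^{\star}}{r}$ is trapped between $\liminf a_r \ge 1$ and $\limsup a_r \le \lim_{r\to0} M_r/\crcdf{x^{\star}}{r} = 1$, so in fact $a_r \to 1$ and the product converges to $1$ outright.

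Beyond nonemptiness of the support, no compactness or topological input is needed: the dichotomy is essentially a formal consequence of the two definitions in \Cref{def:mode} once each weak mode is compared against the single strong mode $x^{\star}$.
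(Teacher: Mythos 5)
Your proof is correct and takes essentially the same route as the paper: both arguments chain the weak-mode inequality of $w$ tested at the single point $x^{\star}$ with the strong-mode limit $\crcdf{x^{\star}}{r}/M_r \to 1$ through the factorisation $\crcdf{w}{r}/M_r = \bigl(\crcdf{w}{r}/\crcdf{x^{\star}}{r}\bigr)\bigl(\crcdf{x^{\star}}{r}/M_r\bigr)$. The only cosmetic difference is that the paper first upgrades the ratio $\crcdf{w}{r}/\crcdf{x^{\star}}{r}$ to a genuine limit equal to $1$ (using that the strong mode $x^{\star}$ is itself a weak mode) before invoking the product rule, whereas your main argument gets by with the one-sided $\liminf$ bound plus the trivial bound $\crcdf{w}{r} \leq M_r$ --- and your closing paragraph recovers the paper's two-sided version exactly.
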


\begin{proof}
	Suppose that $x^{\star}$ is a strong mode and $y^{\star}$ is a weak mode.
	As both $x^{\star}$ and $y^{\star}$ are weak modes, the definitions imply that
	\begin{equation*}
		1 \leq \left(\limsup_{r \to 0} \frac{\crcdf{y^{\star}}{r}}{\crcdf{x^{\star}}{r}}\right)^{-1} = \liminf_{r \to 0} \frac{\crcdf{x^{\star}}{r}}{\crcdf{y^{\star}}{r}} \leq \limsup_{r \to 0} \frac{\crcdf{x^{\star}}{r}}{\crcdf{y^{\star}}{r}} \leq 1.
	\end{equation*}
	An application of the product rule for limits shows that $y^{\star}$ must also be a strong mode:
	\begin{equation*}
		\lim_{r \to 0} \frac{\crcdf{y^{\star}}{r}}{M_r} = \lim_{r \to 0} \frac{\crcdf{x^{\star}}{r}}{M_r} \lim_{r \to 0} \frac{\crcdf{y^{\star}}{r}}{\crcdf{x^{\star}}{r}} = 1.
		\qedhere
	\end{equation*}
\end{proof}

\citet{ClasonHelinKretschmannPiiroinen2019} proposed the \defterm{generalised strong mode}, motivated by inverse problems with hard parameter constraints (in the spirit of Ivanov regularisation) which lead to a posterior assigning zero mass outside of some feasible set.

\begin{definition}
	\label{def:generalised_mode}
	Let $X$ be a separable metric space.
	A \defterm{generalised strong mode} of $\mu \in \prob{X}$ is any point $x^{\star} \in X$ such that, for each sequence $(r_n)_{n \in \Naturals} \to 0$, there exists $(x_n)_{n \in \Naturals} \to x^{\star}$ with
	\begin{equation*}
		\lim_{n \to \infty} \frac{\crcdf{x_n}{r_n}}{M_{r_n}} = 1.
	\end{equation*}
\end{definition}

Taking the constant sequence $x_n = x^{\star}$ in the definition shows that a strong mode $x^{\star}$ is also a generalised strong mode.
Unlike strong and weak modes, generalised strong modes need not lie in the support of the measure.
Furthermore, there is no strong--generalised strong dichotomy or weak--generalised strong dichotomy analogous to \cref{lem:all_weak_or_strong}:
for the measure on $\Reals$ with Lebesgue density $\rho(x) = \one\{x \in [0, 1]\}$, any $x \in (0, 1)$ is a strong mode (and hence a weak mode), but the points $x = 0$ and $x = 1$ are only generalised strong modes and are neither strong modes nor weak modes.

An alternative approach to find ``most likely'' points is to minimise an OM functional associated with the measure of interest. This arises from the study of most-probable paths of diffusion processes \citep{DuerrBach1978}.

\begin{definition}
	\label{def:OM_functional}
	Let $X$ be a metric space and let $\mu \in \prob{X}$.
	Suppose that $\emptyset \neq E \subseteq \supp(\mu)$.
	A function $I \colon E \to \Reals$ is called an \defterm{Onsager--Machlup functional} for $\mu$ if, for all $x, x' \in E$,
	\begin{equation*}
		\lim_{r \to 0} \frac{\crcdf{x}{r}}{\crcdf{x'}{r}} = \exp\left(I(x') - I(x)\right).
	\end{equation*}
\end{definition}

OM functionals are unique up to additive constants and can be interpreted heuristically as the negative logarithm of the Lebesgue density --- but this cannot be taken literally for measures on an infinite-dimensional space, where there is no Lebesgue measure.
For example, an OM functional for a Gaussian measure on an infinite-dimensional Banach space can be defined only on a small subspace called the \emph{Cameron--Martin space} (see \eqref{eq:Gaussian_OM}).
As an OM functional need not be defined on the entire space $X$, it is not immediate that an OM minimiser is in any sense ``most likely'' under the measure $\mu$, and this is the motivation to study small-ball modes as in \cref{def:mode} instead.
A weak mode is always a minimiser of any OM functional for $\mu$, however, and the \emph{$M$-property} of \citet{AyanbayevKlebanovLieSullivan2022a} gives a sufficient condition to ensure that an OM minimiser is a weak mode.

\begin{definition}[$M$-property]
	Let $X$ be a metric space and let $\mu \in \prob{X}$.
	\emph{Property $M(\mu, E)$} holds for the set $\emptyset \neq E \subseteq \supp(\mu)$ if there exists $x^{\star} \in E$ such that
	\begin{equation*}
		\lim_{r \to 0} \frac{\crcdf{x}{r}}{\crcdf{x^{\star}}{r}} = 0 \text{~~for all $x \notin E$.}
	\end{equation*}
\end{definition}

The next result states this equivalence between OM minimisers and weak modes under the $M$-property and shows that the $M$-property is inherited by a posterior of the form \eqref{eq:Bayesian_posterior} from the prior. 
This generalises Proposition~4.1 and Lemma~B.8 of \citet{AyanbayevKlebanovLieSullivan2022a} to potentials that are merely continuous rather than locally uniformly continuous.
In the specific case that $\mu_{0}$ is a Gaussian measure on a separable Banach space $X$, the claim \ref{item:OM_minimisers_are_modes_1} generalises Theorem~3.2 of \citet{DashtiLawStuartVoss2013}, which requires that the potential is locally bounded and Lipschitz.

\begin{proposition}
	\label{prop:OM_minimisers_are_modes}
	Let $X$ be a metric space and suppose that $\mu_{0} \in \prob{X}$ has OM functional $I_{0} \colon E \to \Reals$.
	Suppose that property $M(\mu_0, E)$ holds and that $\mu^{y}$ is a probability measure on $X$ of the form \eqref{eq:Bayesian_posterior} for some continuous potential $\Phi \colon X \to \Reals$.
	Then:
	\begin{enumerate}[label=(\alph*)]
		\item
		\label{item:OM_minimisers_are_modes_1}
		$\mu^y$ has OM functional $I^y \colon E \to \Reals$ given by $I^y(u) = I_0(u) + \Phi(u)$ and property $M(\mu^y, E)$ holds;

		\item
		\label{item:OM_minimisers_are_modes_2}
		$x^{\star} \in X$ is a weak mode for $\mu^{y}$, i.e.\ a weak MAP estimator, if and only if $x^{\star} \in E$ and $x^{\star}$ minimises $I^y$.
	\end{enumerate}
\end{proposition}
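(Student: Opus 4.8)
The plan is to exploit the fact that a continuous potential is essentially constant on a small ball, so that the posterior and prior masses of $\cball{x}{r}$ differ asymptotically only by the local value of $\exp(-\Phi)$. Concretely, fix $x \in X$; by continuity of $\Phi$ at $x$, for every $\epsilon > 0$ there is $\delta > 0$ with $\absval{\Phi(z) - \Phi(x)} < \epsilon$ whenever $z \in \cball{x}{\delta}$, so integrating the density $\exp(-\Phi)$ over $\cball{x}{r}$ for $r < \delta$ and sandwiching gives
\[
	\lim_{r \to 0} \frac{\crcdfm{\mu^y}{x}{r}}{\exp(-\Phi(x)) \, \crcdfm{\mu_0}{x}{r}} = 1.
\]
This single asymptotic identity, valid at every point, is the engine behind the whole proposition: the posterior small-ball mass factorises asymptotically into the prior mass times the pointwise likelihood weight $\exp(-\Phi(x))$. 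Note that only continuity \emph{at the point} $x$ is used, not any Lipschitz estimate or global lower bound, which is what permits the relaxed hypotheses.

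For part \ref{item:OM_minimisers_are_modes_1} I would insert this identity into the relevant ratios and apply the product rule for limits. For $x, x' \in E$, the ratio $\crcdfm{\mu^y}{x}{r}/\crcdfm{\mu^y}{x'}{r}$ splits as the product of two asymptotic factors (each $\to 1$), the constant $\exp(\Phi(x') - \Phi(x))$, and the prior ratio $\crcdfm{\mu_0}{x}{r}/\crcdfm{\mu_0}{x'}{r}$, which tends to $\exp(I_0(x') - I_0(x))$ since $I_0$ is an OM functional for $\mu_0$; the limit is $\exp(I^y(x') - I^y(x))$ with $I^y = I_0 + \Phi$. The same factorisation, now with $x \notin E$ and $x'$ replaced by the distinguished witness $z^{\star} \in E$ furnished by $M(\mu_0, E)$, converts the vanishing prior ratio into a vanishing posterior ratio (the likelihood factor being a fixed constant), establishing $M(\mu^y, E)$ with the \emph{same} witness $z^{\star}$. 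Here one uses that $\mu^y$ and $\mu_0$ are equivalent measures (the density $\exp(-\Phi)$ is strictly positive), so they share the same support and all denominators are positive.

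Part \ref{item:OM_minimisers_are_modes_2} is then a formal consequence of part \ref{item:OM_minimisers_are_modes_1} together with the abstract interplay of OM minimisers, the $M$-property, and weak modes. If $x^{\star} \in E$ minimises $I^y$, then for $x \in E$ the OM identity gives $\lim_r \crcdfm{\mu^y}{x}{r}/\crcdfm{\mu^y}{x^{\star}}{r} = \exp(I^y(x^{\star}) - I^y(x)) \leq 1$, while for $x \notin E$ one routes through $z^{\star}$: the product of $\crcdfm{\mu^y}{x}{r}/\crcdfm{\mu^y}{z^{\star}}{r} \to 0$ and $\crcdfm{\mu^y}{z^{\star}}{r}/\crcdfm{\mu^y}{x^{\star}}{r} \to \exp(I^y(x^{\star}) - I^y(z^{\star}))$ tends to $0 \leq 1$, so $x^{\star}$ is a weak mode. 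Conversely, if $x^{\star}$ is a weak mode, then $x^{\star} \in E$ (otherwise $M(\mu^y, E)$ forces $\crcdfm{\mu^y}{z^{\star}}{r}/\crcdfm{\mu^y}{x^{\star}}{r} \to \infty$, contradicting the $\limsup$ bound at $x = z^{\star}$), and $x^{\star}$ minimises $I^y$ (otherwise some $x' \in E$ with $I^y(x') < I^y(x^{\star})$ yields $\lim_r \crcdfm{\mu^y}{x'}{r}/\crcdfm{\mu^y}{x^{\star}}{r} > 1$, again violating the weak-mode bound).

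The only genuinely analytic step is the opening asymptotic identity, and the main obstacle is making the sandwich argument clean at a point where $\Phi$ is merely continuous rather than Lipschitz; everything afterwards is bookkeeping with the product rule for limits, the definition of an OM functional, and the $M$-property. I would be careful to keep the $M$-property witness $z^{\star}$ identical for prior and posterior, since this is precisely what lets the OM identity be applied to the pair $(z^{\star}, x^{\star})$ in both directions of part \ref{item:OM_minimisers_are_modes_2}.
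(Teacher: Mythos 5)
Your proposal is correct, and for part \ref{item:OM_minimisers_are_modes_1} it rests on the same core idea as the paper's proof: continuity of $\Phi$ makes $\exp(-\Phi)$ essentially constant on small balls, so posterior ball masses are comparable to prior ball masses up to the local likelihood weight. The packaging differs slightly. The paper never isolates your pointwise factorisation identity; it bounds the ratio $\mu^y(\cball{x}{r})/\mu^y(\cball{x'}{r})$ directly by $\exp\bigl(\Phi(x') - \Phi(x) + 2\varepsilon\bigr)\,\mu_0(\cball{x}{r})/\mu_0(\cball{x'}{r})$ for $r$ small, then takes limits and infimises over $\varepsilon$. That one-sided formulation has a small technical advantage: it remains meaningful when $x \notin \supp(\mu_0)$, where your ``engine'' identity degenerates to $0/0$ and is, strictly speaking, undefined --- and such $x$ do occur in the $M$-property and weak-mode quantifiers, which range over all of $X$. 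The fix is trivial (state your engine as the two-sided inequality $\exp(-\Phi(x)-\varepsilon)\,\mu_0(\cball{x}{r}) \leq \mu^y(\cball{x}{r}) \leq \exp(-\Phi(x)+\varepsilon)\,\mu_0(\cball{x}{r})$ rather than as a ratio limit, and use it in numerators and denominators separately), after which your argument goes through verbatim, so this is a presentational wrinkle rather than a gap. The more substantive difference is part \ref{item:OM_minimisers_are_modes_2}: the paper dispatches it by citing Proposition~4.1 of Ayanbayev, Klebanov, Lie and Sullivan (2022), whereas you reprove it from scratch, routing points $x \notin E$ through the $M$-property witness $z^{\star}$ and points $x \in E$ through the OM identity, and running the same two tools contrapositively for the converse. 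Your version buys self-containedness (and makes explicit why keeping the same witness $z^{\star}$ for prior and posterior matters); the paper's citation buys brevity, since your argument is exactly the content of the cited result.
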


\begin{proof}
	Let $x \in X$ and $x' \in \supp(\mu_{0})$.
	As the density $\exp(-\Phi)$ is strictly positive, $x' \in \supp(\mu^{y})$ and thus $\mu^{y}(\cball{x'}{r}) > 0$ for all $r > 0$.
	By the continuity of $\Phi$, for each $\varepsilon > 0$ there exists $\delta > 0$ such that $\norm{u - x}_X < \delta \implies \absval{\Phi(u) - \Phi(x)} < \varepsilon$ and $\norm{u - x'}_X < \delta \implies \absval{\Phi(u) - \Phi(x')} < \varepsilon$.
	Hence, for $r < \delta$, it follows that
	\begin{align}
		\notag
		\frac{\mu^y(\cball{x}{r})}{\mu^y(\cball{x'}{r})} &= \frac{\exp(-\Phi(x)) \int_{\cball{x}{r}} \exp(\Phi(x) - \Phi(u)) \,\mu_0(\rd u)}{\exp(-\Phi(x')) \int_{\cball{x'}{r}} \exp(\Phi(x') - \Phi(u)) \,\mu_0(\rd u)} \\
		\label{eq:posterior_ratio}
		&< \exp\Bigl(\Phi(x') - \Phi(x) + 2\varepsilon\Bigr) \frac{\mu_0(\cball{x}{r})}{\mu_0(\cball{x'}{r})}.
	\end{align}
	Property $M(\mu^y, E)$ follows immediately from \eqref{eq:posterior_ratio} by choosing $x \notin E$, $x' \in E$ and taking the $\limsup$ as $r \to 0$.
	To obtain the OM functional $I^{y}$, suppose instead that $x, x' \in E$; then by \eqref{eq:posterior_ratio} and using the OM functional $I_{0}$ for $\mu_{0}$,
	\begin{align*}
		\limsup_{r \to 0} \frac{\mu^{y}(\cball{x}{r})}{\mu^{y}(\cball{x'}{r})}
		&\leq \exp\Bigl( \Phi(x') - \Phi(x) + 2\varepsilon \Bigr) \limsup_{r \to 0} \frac{\mu_{0}(\cball{x}{r})}{\mu_{0}(\cball{x'}{r})} \\
		&= \exp\Bigl(\Phi(x') - \Phi(x) + 2\varepsilon + I_{0}(x') - I_{0}(x) \Bigr).
	\end{align*}
	By deriving a lower bound analogous to \eqref{eq:posterior_ratio} using the continuity of $\Phi$ and taking the $\liminf$ as $r \to 0$, we obtain the inequality
	\begin{align*}
		\exp\Bigl(\Phi(x') - \Phi(x) + I_{0}(x') - I_{0}(x) - 2\varepsilon \Bigr) &= \exp\Bigl( \Phi(x') - \Phi(x) - 2\varepsilon \Bigr) \liminf_{r \to 0} \frac{\mu_{0}(\cball{x}{r})}{\mu_{0}(\cball{x'}{r})} \\
		&\leq \liminf_{r \to 0} \frac{\mu^{y}(\cball{x}{r})}{\mu^{y}(\cball{x'}{r})}.
	\end{align*}
	As $\varepsilon > 0$ is arbitrary this proves that $I^{y}(u) = I_{0}(u) + \Phi(u)$.
	
	The claim in \ref{item:OM_minimisers_are_modes_2} is an immediate consequence of \citet[Proposition~4.1]{AyanbayevKlebanovLieSullivan2022a}.
\end{proof}

Thus, when property $M(\mu^y, E)$ holds, one can view $I^y$ as an extended-real-valued function with value $+\infty$ outside $E$.
This interpretation is not valid if the $M$-property does not hold and one can say very little about the behaviour of $\mu^y$ on balls centred outside of $E$ using an OM functional in this case.

While this article considers only Gaussian priors, MAP estimators have also been studied for Bayesian inverse problems with Besov and Cauchy priors \citep{AgapiouBurgerDashtiHelin2018,AyanbayevKlebanovLieSullivan2022b}.
Besov and Cauchy priors are typically constructed as product measures placing full mass on a Banach subspace of $\Reals^\infty$, and the product structure of $\Reals^\infty$ makes finite-dimensional approximation arguments possible.
As an arbitrary Banach space need not have such product structure, we instead exploit the fact that a Gaussian measure is fully determined by its behaviour on a Hilbert subspace (the Cameron--Martin space) whose geometry is much more convenient to work with.

\subsection{Gaussian measures}
This section summarises the properties of Gaussian measures used in the article;
see the monograph of \citet{Bogachev1998} for a thorough introduction to Gaussian measures.
If $X$ is a separable Banach space, a measure $\gamma \in \prob{X}$ is \emph{Gaussian} if the pushforward $\gamma \circ f^{-1}$ is a Gaussian measure on $\Reals$ for every $f$ lying in the topological dual $X^{\ast}$.
The measure $\gamma$ is centred if it has mean zero and nondegenerate if it has full support, i.e.\ $\supp(\gamma) = X$;
we assume that $\gamma$ is always centred and nondegenerate in the remainder of the article.

The \defterm{reproducing-kernel Hilbert space} (RKHS) $X_{\gamma}^{\ast}$ of $\gamma$ is the $L^{2}(\gamma)$-closure of $X^{\ast}$, and the covariance operator $R_\gamma \colon X_\gamma^{\ast} \to (X^{\ast})'$, taking values in the algebraic dual $(X^{\ast})'$ of $X^{\ast}$, is given by
\begin{equation*}
	R_\gamma(f)(g) \defeq \int_X f(x) g(x) \,\gamma(\rd x).
\end{equation*}
As $X$ is separable, the measure $\gamma$ is Radon and thus $R_\gamma(f)$ is representable by an element of $X$ for any $f \in X^{\ast}_{\gamma}$ \citep[Theorem~3.2.3]{Bogachev1998}.
The image of $R_\gamma$ in $X$ is called the \defterm{Cameron--Martin space} $E \subset X$.
It is a separable Hilbert space under the Cameron--Martin inner product $\innerprod{h}{k}_E \defeq \innerprod{R_\gamma^{-1} h}{R_\gamma^{-1} k}_{L^2(\gamma)}$, which induces the norm $\norm{h}_{E} \defeq \norm{R_\gamma^{-1} h}_{L^{2}(\gamma)}$.
The Cameron--Martin space of a Radon Gaussian measure $\gamma$ is compactly embedded in $X$, i.e.\ there exists $C > 0$ such that 
\begin{equation} \label{eq:compact_embedding}
	\norm{x}_X \leq C \norm{x}_{E}
\end{equation}
and the inclusion $\iota \colon E \to X$ is a compact operator \citep[Corollary~3.2.4]{Bogachev1998}.
In particular, any $E$-weakly convergent sequence is mapped by $\iota$ to an $X$-strongly convergent sequence.

The covariance operator $R_\gamma \colon X_\gamma^{\ast} \to E$ is a Hilbert isometric isomorphism between the RKHS (equipped with the $L^2(\gamma)$-inner product) and the Cameron--Martin space (equipped with the Cameron--Martin inner product).

The Cameron--Martin space for $\gamma$ is precisely the set of all directions $h \in X$ for which the shifted measure $\gamma_h(\quark) \defeq \gamma(\quark - h)$ is absolutely continuous with respect to $\gamma$.
The space $E$ has $\gamma$-measure zero, but if $\gamma$ is nondegenerate then $E$ is dense in $X$.
When $h \in E$, the density of the shifted measure $\gamma_{h}$ with respect to $\gamma$ is given by the \defterm{Cameron--Martin formula} \citep[Corollary~2.4.3]{Bogachev1998},
\begin{equation} \label{eq:Cameron--Martin_formula}
	\gamma_h(\rd x) = \exp\left( \bigl(R_\gamma^{-1}h\bigr)(x) - \frac{1}{2}\norm{h}_{E}^{2} \right) \,\gamma(\rd x).
\end{equation}
If $h \notin E$, then the measures $\gamma$ and $\gamma_h$ are mutually singular by the Feldman--H\'ajek theorem \citep[Theorem~2.7.2]{Bogachev1998}.

A centred Gaussian measure $\gamma$ has OM functional
\begin{equation} \label{eq:Gaussian_OM}
	I \colon E \to \Reals,~~I(u) = \frac{1}{2} \norm{u}_{E}^{2},
\end{equation}
which is defined only on the Cameron--Martin space $E$.
Property $M(\gamma, E)$ is known to hold when $X$ is a separable Hilbert space, as proven by \citet[Corollary~3.8]{DashtiLawStuartVoss2013} and \citet[Corollary~5.2]{AyanbayevKlebanovLieSullivan2022a}, and when $X = \ell^p$, $1 \leq p < \infty$, provided that $\gamma$ has diagonal covariance structure \citep[Lemma~4.5]{KlebanovWacker2022}.
The measure $\gamma$ also satisfies \defterm{Anderson's inequality} \citep[Theorem~2.8.10]{Bogachev1998}:
\begin{equation} \label{eq:Anderson_inequality}
	\crcdfm{\gamma}{x}{r} \leq \crcdfm{\gamma}{0}{r} \text{~~for any $x \in X$ and $r > 0$.}
\end{equation}
Gaussian measures do not charge the boundaries of metric balls, i.e.\ $\gamma(\partial \cball{x}{r}) = 0$ \citep[see e.g.][Lemma~6.1]{AgapiouBurgerDashtiHelin2018}, so it would be equivalent to use open balls in any of the results in this article.

The tail behaviour of a Gaussian measure is described by Fernique's theorem \citep{Fernique1970}, and this is the chief reason for the lower bound \eqref{eq:Phi_lower_bound} on the potential needed in \cref{thm:main}. 
Fernique's theorem states that for any Gaussian measure $\gamma$ on a separable Banach space $X$, there exists $\eta > 0$ such that
\begin{equation*}
	\int_X \exp\left(\eta \norm{x}_X^2\right) \,\gamma(\rd x) < \infty.
\end{equation*}

In the rest of the article, $\gamma$ will denote a centred nondegenerate Gaussian measure and the prior measure $\mu_0$ will always be a centred nondegenerate Gaussian;
in either case, $E$ will denote the corresponding Cameron--Martin space.

\subsection{Bayesian inverse problems}
\label{sec:BIPs}

Ill-posed inverse problems are challenging to solve and require the use of prior information about the solution $x$ to restore the well-posedness of the problem.
The motivating example in this article is the nonlinear inverse problem of recovering an infinite-dimensional parameter (e.g.\ a function) $x \in X$ from a noisy observation of the finite-dimensional quantity $y = \mathcal{G}(x)$, as discussed in the introduction.

Well-posedness is essential to allow for numerical solution of inverse problems, and the classical approach to restoring well-posedness uses regularisation \citep[see e.g.][]{BenningBurger2018}:
a variational solution to the inverse problem \eqref{eq:additive-noise_BIP} is a minimiser of the \emph{Tikhonov functional}
\begin{equation*}
	F(x) = \Norm{\mathcal{G}(x) - y} + \alpha \Norm{x}',
\end{equation*}
where $\Norm{\quark}'$ is some norm penalising undesirable properties of the solution $x$, e.g.\ the total-variation norm of the function $x$ \citep{RudinOsherFatemi1992}.

In contrast, the Bayesian approach incorporates prior information using a prior measure on the solution space.
As stated in the next theorem, under mild conditions on the prior $\mu_{0}$ and on the potential $\Phi$ arising from the observation operator $\mathcal{G}$, an analogue of Bayes' rule gives an expression for the posterior for $x \mid y$ on the infinite-dimensional parameter space.

\begin{theorem}[{\citealp[Theorem~14]{DashtiStuart2017}}]
	\label{thm:Bayes}
	Let $X$ and $Y$ be separable Banach spaces and suppose that $\mathcal{G} \colon X \to Y$ is measurable.
	Suppose that $x$ has prior distribution $\mu_{0} \in \prob{X}$ and
	\begin{equation*}
		y = \mathcal{G}(x) + \xi,
	\end{equation*}
	where $\xi$ is random noise with distribution $\tau_{0} \in \prob{Y}$, which is assumed to be independent of $x$.
	Suppose that the translated measure $\tau_{\mathcal{G}(x)}(\quark) \defeq \tau_{0}(\quark - \mathcal{G}(x))$ is absolutely continuous with respect to $\tau_{0}$ for $\mu_{0}$-almost all $x \in X$ and define the potential
	\begin{equation*}
		\Phi(x; y) \defeq -\log \frac{\rd \tau_{\mathcal{G}(x)}}{\rd \tau_{0}}(y).
	\end{equation*}
	Suppose further that $\Phi \colon X \times Y \to \Reals$ is measurable with respect to the product measure $\mu_{0} \otimes \tau_{0}$, and that for $\tau_{0}$-almost all $y \in Y$,
	\begin{equation} \label{eq:normalisation_constant}
		Z(y) \defeq \int_{X} \exp\Bigl(-\Phi(x; y)\Bigr) \,\mu_{0}(\rd x) > 0.
	\end{equation}
	Then the conditional distribution $\mu^{y}$ of $x \mid y$ exists, is absolutely continuous with respect to $\mu_{0}$, and
	\begin{equation} \label{eq:posterior_density_BIP}
		\frac{\rd \mu^{y}}{\rd \mu_{0}}(x) = \frac{1}{Z(y)} \exp\Bigl( -\Phi(x; y) \Bigr).
	\end{equation}
\end{theorem}

The data $y \in Y$ will be considered fixed and we suppress the explicit dependence on $y$; thus, the potential is a map $\Phi \colon X \to \Reals$.
When $y$ has finite dimension and $\tau_{0}$ is absolutely continuous with respect to the Lebesgue measure, $\tau_{\mathcal{G}(x)}$ is absolutely continuous with respect to $\tau_{0}$ and $\Phi(x)$ can typically be interpreted as a misfit functional: 
when $\xi$ has mean-zero Gaussian distribution $\xi \sim N(0, \Sigma)$ on $Y = \Reals^{d}$, for example, one can take
\begin{equation}
	\label{eq:Gaussian-noise_BIP_potential}
	\Phi(x) \propto \norm{\mathcal{G}(x) - y}_{\Sigma}^{2},~~~~~~\norm{\quark}_{\Sigma} = \norm{\Sigma^{-1/2} \quark}.
\end{equation}
By absorbing the normalisation factor $\tfrac{1}{Z(y)}$ into $\Phi$, the posterior \eqref{eq:posterior_density_BIP} can be expressed in the form \eqref{eq:Bayesian_posterior} discussed in the introduction.

To ensure that the posterior measure is normalisable for a given potential $\Phi \colon X \to \Reals$, i.e.\ is a probability measure, we impose mild conditions on the form of the potential $\Phi$ in \cref{thm:main}.
If the measure $\mu^{y}$ does indeed arise from an inverse problem as in \cref{thm:Bayes}, the following result is merely a sufficient condition to ensure that $Z(y) > 0$ in \eqref{eq:normalisation_constant}.

\begin{theorem}[{\citealp[Theorem~4.1]{Stuart2010}}]
	\label{thm:Phi_normalisable}
	Suppose that the potential $\Phi \colon X \to \Reals$ is continuous and that for each $\eta > 0$, there exists a constant $K(\eta) \in \Reals$ such that
	\begin{equation} \label{eq:density_lower_bound_normalisable}
		\Phi(x) \geq K(\eta) - \eta\norm{x}_X^2 \text{~~for all $x \in X$.}
	\end{equation}
	Then the posterior measure $\mu^{y}$ given by \eqref{eq:Bayesian_posterior} can be normalised to yield a probability measure.
\end{theorem}

\begin{proof}
	Given the unnormalised density $\exp(-\Phi)$, one can normalise to obtain a probability measure with density $\exp(-\Phi')$ by setting $\Phi' = \Phi - \log Z$ with the finite normalisation constant
	\begin{equation*}
		Z \defeq \int_X \exp\Bigl(-\Phi(x)\Bigr)\,\mu_0(\rd x) \leq \int_X \exp\Bigl(-K(\eta) + \eta \norm{x}_X^2\Bigr) \,\mu_{0}(\rd x) < \infty,
	\end{equation*}
	where the upper bound follows by applying \eqref{eq:density_lower_bound_normalisable} with an appropriate $\eta > 0$ such that the integral is finite by Fernique's theorem.
\end{proof}

As discussed, a significant reason for studying MAP estimators is that they connect the Bayesian and variational approaches to inverse problems.
When the $M$-property holds, the weak MAP estimators of a Bayesian inverse problem coincide with minimisers of an OM functional, and when a Gaussian prior is used, an OM functional for the posterior has the form of a Tikhonov functional \citep[see e.g.][]{DashtiLawStuartVoss2013}. 
This correspondence depends on the $M$-property, which until now has been shown only for Gaussian measures on separable Hilbert spaces and for diagonal Gaussian measures on $X = \ell^{p}$, $1 \leq p < \infty$.
This article therefore extends the connection between Bayesian and variational approaches to Banach spaces.

\section{Small-ball probabilities for Gaussian measures in Banach spaces}
\label{sec:small-ball_Gaussian_Banach}

The main technical result required for the proof of \cref{thm:main} is the following bound on the ratio of the measures of small balls under a Gaussian measure stated in \cref{prop:CM_bound}.
This bound is similar in spirit to the explicit Anderson inequality of \citet[Lemma~3.6]{DashtiLawStuartVoss2013}, which takes the form
\begin{equation}
	\label{eq:explicit_Anderson_inequality_in_X}
	\frac{\gamma(\cball{x}{r})}{\gamma(\cball{0}{r})} \leq \exp\Bigl( a\left(r^{2} - (\norm{x}_{X} - r)^{2} \right) \Bigr)
\end{equation}
when $\gamma$ is a centred nondegenerate Gaussian measure on the separable Banach space $X$, $a = a(\gamma) > 0$, $x \in X$ and $r > 0$.
Both \eqref{eq:explicit_Anderson_inequality_in_X} and the bound we prove in \cref{prop:CM_bound} may be thought of as quantitative analogues of the Anderson inequality \eqref{eq:Anderson_inequality}.
In contrast to the inequality \eqref{eq:explicit_Anderson_inequality_in_X}, which is written in terms of the ambient norm of the Banach space, the result here is written in terms of the \emph{decentring function} \citep{GhosalVanDerVaart2017} given by
\begin{equation*}
	\psi_{x}(r) \defeq \inf_{h \in E \cap \cball{x}{r}} \frac{1}{2} \norm{h}_{E}^{2}.
\end{equation*}
We will show in \cref{prop:CM_bound} that the infimum in the decentring function is attained by some point $h^{\star} \in E$, justifying the use of a minimum instead.

When $X$ is a separable Hilbert space, the Cameron--Martin norm can be viewed as a reweighting of the norm of $X$ and the bound \eqref{eq:explicit_Anderson_inequality_in_X} in $X$-norm suffices to prove the desired results on MAP estimators.
In a Banach space, however, this is no longer true --- thus, writing the bound in terms of the Cameron--Martin norm is a natural generalisation, with the compact embedding \eqref{eq:compact_embedding} providing the means to relate the two norms.

\begin{proposition}[Explicit Anderson inequality in Cameron--Martin norm]
	\label{prop:CM_bound}
	Let $X$ be a separable Banach space equipped with a centred nondegenerate Gaussian measure $\gamma$.
	For any $x \in X$ and $r > 0$,
	\begin{equation*}
		\frac{\crcdfm{\gamma}{x}{r}}{\crcdfm{\gamma}{0}{r}} \leq \exp\left(-\min_{h \in E \cap \cball{x}{r}} \frac{1}{2} \norm{h}_{E}^{2} \right).
	\end{equation*}
\end{proposition}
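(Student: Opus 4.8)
The plan is to combine a Cameron--Martin shift with Anderson's inequality~\eqref{eq:Anderson_inequality}, taking as the shift the point of $E$ of least Cameron--Martin norm that lies in the ball. Let $h^{\star}$ minimise $\tfrac{1}{2}\norm{h}_{E}^{2}$ over $E \cap \cball{x}{r}$; this set is nonempty because $E$ is dense in $X$, and a minimiser exists because the embedding $E \hookrightarrow X$ is compact, so closed Cameron--Martin balls are compact in $X$, while $\norm{\quark}_{E}$ is lower semicontinuous for the topology of $X$ (one extracts an $X$-convergent subsequence from a minimising sequence lying in a fixed Cameron--Martin ball). When $\norm{x}_{X} \le r$ one has $0 \in \cball{x}{r}$, so $h^{\star} = 0$ and the claim is exactly Anderson's inequality; I therefore assume $h^{\star} \neq 0$.

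Writing $\hat{h} \defeq R_{\gamma}^{-1} h^{\star}$ for the associated linear functional and applying the Cameron--Martin formula~\eqref{eq:Cameron--Martin_formula} with shift $-h^{\star}$, translation of $\cball{x}{r}$ by $-h^{\star}$ yields
\[
	\crcdfm{\gamma}{x}{r} = \exp\Bigl(-\tfrac{1}{2}\norm{h^{\star}}_{E}^{2}\Bigr) \int_{\cball{x - h^{\star}}{r}} \exp\bigl(-\hat{h}(w)\bigr) \,\gamma(\rd w).
\]
The prefactor is precisely the target exponent, so it suffices to bound the integral by $\crcdfm{\gamma}{0}{r}$. If $\hat{h} \ge 0$ holds $\gamma$-almost everywhere on $\cball{x - h^{\star}}{r}$, then $\exp(-\hat{h}) \le 1$ there, the integral is at most $\crcdfm{\gamma}{x - h^{\star}}{r}$, and Anderson's inequality~\eqref{eq:Anderson_inequality} gives $\crcdfm{\gamma}{x - h^{\star}}{r} \le \crcdfm{\gamma}{0}{r}$, which closes the argument.

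The sign condition is forced by the optimality of $h^{\star}$: since $h^{\star}$ minimises the Hilbert norm over the convex set $E \cap \cball{x}{r}$, the variational inequality $\innerprod{h^{\star}}{g - h^{\star}}_{E} \ge 0$ holds for all $g \in E \cap \cball{x}{r}$, and taking $g = w + h^{\star}$ together with $\innerprod{h^{\star}}{w}_{E} = \hat{h}(w)$ shows $\hat{h}(w) \ge 0$ for every $w \in E \cap \cball{x - h^{\star}}{r}$. I expect the main obstacle to be upgrading this from the Cameron--Martin space $E$, which is $\gamma$-null, to $\gamma$-almost all of the ambient ball: because $\hat{h}$ is in general only a measurable linear functional rather than an element of $X^{\ast}$, the half-space $\{\hat{h} \ge 0\}$ need not be closed and one cannot pass from the dense set $E \cap \cball{x - h^{\star}}{r}$ to its closure by continuity alone. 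To circumvent this I would instead run the shift-and-Anderson step with \emph{continuous} functionals: for $\phi \in X^{\ast}$ such that $h_{\phi} \defeq R_{\gamma}\phi$ satisfies $\phi(v) \ge \norm{h_{\phi}}_{E}^{2}$ for all $v \in \cball{x}{r}$, continuity of $\phi$ makes $\phi \ge 0$ hold everywhere on $\cball{x - h_{\phi}}{r}$, so the same computation gives $\crcdfm{\gamma}{x}{r} \le \exp(-\tfrac{1}{2}\norm{h_{\phi}}_{E}^{2})\,\crcdfm{\gamma}{0}{r}$. It then remains to show that such continuous functionals attain Cameron--Martin norms approaching $\min_{h \in E \cap \cball{x}{r}} \norm{h}_{E}^{2}$, a convex-duality statement identifying the least-norm point of the ball with its supporting hyperplanes, using that $R_{\gamma}(X^{\ast})$ is dense in $E$. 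Establishing this duality, despite $\cball{x}{r}$ not being norm-bounded in $E$, is the step I expect to require the most care.
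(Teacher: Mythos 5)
Your proposal tracks the paper's own argument through its first half: existence of the minimiser $h^{\star}$ (your compact-embedding plus $X$-lower-semicontinuity argument is a valid alternative to the paper's $E$-weak-compactness one), the Cameron--Martin shift by $-h^{\star}$, the variational inequality, and the reduction of everything to the sign condition $\hat{h} \geq 0$ $\gamma$-a.e.\ on $\cball{x - h^{\star}}{r}$ followed by Anderson's inequality \eqref{eq:Anderson_inequality}. You also correctly diagnose the crux: $\hat{h} = R_{\gamma}^{-1}h^{\star}$ is merely a measurable linear functional, so nonnegativity on the $\gamma$-null set $E \cap \cball{x - h^{\star}}{r}$ cannot be upgraded to the whole ball by continuity. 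But the repair you propose is left unproven: the assertion that there exist $\phi \in X^{\ast}$ with $\phi \geq \norm{R_{\gamma}\phi}_{E}^{2}$ on $\cball{x}{r}$ and $\norm{R_{\gamma}\phi}_{E}^{2}$ approaching $\min_{h \in E \cap \cball{x}{r}} \norm{h}_{E}^{2}$ is a strong-duality statement that carries the entire content of the proposition, and you explicitly defer it (``the step I expect to require the most care''). Weak duality --- your own estimate $\phi(h) = \innerprod{R_{\gamma}\phi}{h}_{E} \leq \norm{R_{\gamma}\phi}_{E} \norm{h}_{E}$ --- only shows such $\phi$ can do no better, and, as you note, naive approximation of $h^{\star}/\norm{h^{\star}}_{E}$ by $R_{\gamma}\phi_n$ fails because $\cball{x}{r}$ is $E$-unbounded. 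So as written there is a genuine gap, located exactly at the step the rest of the machinery was built to resolve.

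The gap is fillable, and by a route genuinely different from the paper's. The paper proves $g^{\star} \geq 0$ a.e.\ directly: expanding in an orthonormal basis $(h_k)$ of $E$, the functionals $g_n \defeq \sum_{k \leq n} (R_{\gamma}^{-1}h_k) \innerprod{h_k}{h^{\star}}_{E}$ converge to $g^{\star}$ in $L^{2}(\gamma)$, and \citet[Theorem~3.5.1]{Bogachev1998} guarantees that for $\gamma$-a.e.\ $u$ in the shifted ball the finite sections $\sum_{k \leq n}(R_{\gamma}^{-1}h_k)(u) h_k + h^{\star}$ lie in $E \cap \cball{x}{r}$, where the variational inequality applies; a pointwise a.e.\ convergent subsequence of $(g_n)$ finishes. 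Your route instead needs Fenchel--Rockafellar duality: take $f \defeq \norm{\cdot}_{E}$ extended by $+\infty$ off $E$ and $g \defeq$ the indicator of $\cball{x}{r}$; then $f^{\ast}(\phi) = 0$ if $\norm{\phi}_{L^{2}(\gamma)} \leq 1$ and $+\infty$ otherwise, $-g^{\ast}(-\phi) = \inf_{v \in \cball{x}{r}} \phi(v) = \phi(x) - r\norm{\phi}_{X^{\ast}}$, and the qualification condition --- a point where $g$ is continuous and $f$ is finite --- holds because $E$ is dense in $X$ and hence meets the open ball. Strong duality with dual attainment therefore gives
\begin{equation*}
	d \defeq \min_{h \in E \cap \cball{x}{r}} \norm{h}_{E} = \max\Set{\phi(x) - r\norm{\phi}_{X^{\ast}}}{\phi \in X^{\ast},~\norm{\phi}_{L^{2}(\gamma)} \leq 1},
\end{equation*}
any maximiser $\phi^{\star}$ satisfies $\norm{\phi^{\star}}_{L^{2}(\gamma)} = 1$ when $d > 0$, and $\psi \defeq d\,\phi^{\star}$ is precisely your supporting functional with $\norm{R_{\gamma}\psi}_{E}^{2} = d^{2}$. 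It is the interior-point qualification, not $E$-boundedness of the ball, that drives this --- which is how the duality route sidesteps the obstruction you identified. Supply this argument (or the paper's series-expansion argument) and your proof is complete; without it, the proposal stops short of the result.
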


\begin{proof}
	This is an immediate corollary of \citet[Proposition~11.19]{GhosalVanDerVaart2017}, and we give a version of the proof here.
	The set $E \cap \cball{x}{r}$ is nonempty (as $\gamma$ is nondegenerate), $E$-closed (as it is the preimage of $\cball{x}{r}$ under the continuous embedding $\iota \colon E \to X$) and convex.
	This implies that $E \cap \cball{x}{r}$ is $E$-weakly closed.
	Hence, the $E$-weakly lower semicontinuous map $h \mapsto \|h\|_{E}^{2}$ defined on $E \cap \cball{x}{r}$ attains its minimum on some $h^{\star} = R_\gamma g^{\star} \in E$.
	The Cameron--Martin formula \eqref{eq:Cameron--Martin_formula} gives the equality 
	\begin{equation} \label{eq:Cameron--Martin_bound}
		\crcdfm{\gamma}{x}{r} = \gamma_{-h^{\star}}(\cball{x - h^{\star}}{r}) = \exp\left(-\frac{1}{2}\norm{h^{\star}}_{E}^{2}\right) \int_{\cball{x - h^{\star}}{r}} \exp\bigl(-g^{\star}(u)\bigr) \,\gamma(\rd u),
	\end{equation}
	and we now show that $g^{\star}(u) \geq 0$ for $\gamma$-almost all $u \in \cball{x - h^{\star}}{r}$.
	As $E \cap \cball{x}{r}$ is convex and $h^{\star}$ minimises the $E$-norm on $E \cap \cball{x}{r}$, it follows that
	\begin{equation*}
		\norm{ (1 - \lambda) h^{\star} + \lambda h }_{E}^{2} \geq \norm{ h^{\star} }_{E}^{2} \text{~~for any $h \in E \cap \cball{x}{r}$ and $\lambda \in [0, 1]$.}
	\end{equation*}
	Rearranging and taking limits as $\lambda \to 0$ shows that
	\begin{equation} \label{eq:E-inner_product_bound}
		\langle h, h^{\star}\rangle_{E} \geq \|h^{\star}\|_{E}^{2} \text{~~for any $h \in E \cap \cball{x}{r}$.}
	\end{equation}
	Now, let $(h_n)_{n \in \Naturals}$ be any orthonormal basis of $E$.
	As the covariance operator $R_\gamma$ is a Hilbert isomorphism and $\sum_{k = 1}^n h_k \langle h_k, h^{\star} \rangle_{E} \to h^{\star}$ in $E$ as $n \to \infty$, it follows that
	\begin{align*}
		g_n &\defeq \sum_{k = 1}^n  (R_\gamma^{-1} h_k) \langle h_k, h^{\star} \rangle_{E} = R_\gamma^{-1} \left[ \sum_{k = 1}^n h_k \langle h_k, h^{\star} \rangle_{E} \right] \to g^{\star} \text{~~in $L^{2}(\gamma)$ as $n \to \infty$.}
	\end{align*}
	Hence, there is a subsequence $(g_{n_k})_{k \in \Naturals}$ converging pointwise $\gamma$-almost everywhere to $g^{\star}$.
	By \citet[Theorem~3.5.1]{Bogachev1998}, $\gamma$-almost all elements $u \in \cball{x - h^{\star}}{r}$ may be written as 
	\begin{equation*}
		u = \sum_{k \in \Naturals} (R_\gamma^{-1} h_k)(u) h_k,
	\end{equation*}
	where the convergence of the series is in the norm of $X$.
	Hence, for all $n$ sufficiently large and $u \in \cball{x - h^{\star}}{r}$,
	\begin{equation*}
		\sum_{k = 1}^n (R_\gamma^{-1} h_k)(u) h_k + h^{\star} \in \cball{x}{r}.
	\end{equation*}
	Using \eqref{eq:E-inner_product_bound}, we observe that
	\begin{equation*}
		g_{n}(u) + \norm{h^{\star}}_{E}^2 = \left\langle \sum_{k = 1}^n (R_{\gamma}^{-1} h_{k})h_{k} + h^{\star}, h^{\star} \right\rangle_{E} \geq \norm{h^{\star}}_{E}^{2},
	\end{equation*}
	so it immediately follows that $g_{n}(u) \geq 0$.
	As $g_{n_k}(u) \to g^{\star}(u)$ as $k \to \infty$ $\gamma$-almost everywhere, we obtain the claimed lower bound $g^{\star}(u) = \lim_{k \to \infty} g_{n_k}(u) \geq 0$ for $\gamma$-almost all $u \in \cball{x - h^{\star}}{r}$.
	The result follows by bounding the integrand in \eqref{eq:Cameron--Martin_bound} and using Anderson's inequality \eqref{eq:Anderson_inequality}:
	\begin{align*}
		\crcdfm{\gamma}{x}{r} &= \exp\left(-\frac{1}{2} \norm{h^{\star}}_{E}^{2}\right) \int_{\cball{x - h^{\star}}{r}} \exp\bigl(-g^{\star}(u)\bigr) \,\gamma(\rd u) \\
		&\leq \exp\left(-\frac{1}{2} \norm{h^{\star}}_{E}^{2}\right) \crcdfm{\gamma}{x - h^{\star}}{r} \\
		&\leq \exp\left(-\frac{1}{2} \norm{h^{\star}}_{E}^{2}\right) \crcdfm{\gamma}{0}{r}. \qedhere
	\end{align*}
\end{proof}

Though we shall not make use of \eqref{eq:explicit_Anderson_inequality_in_X}, it can be proven easily from \cref{prop:CM_bound} by applying the compact embedding \eqref{eq:compact_embedding}.

The following corollary on the measure of balls with centres converging to some $x^{\star} \in E$ is slightly weaker than the corresponding results of \citet[Lemma~4.14]{Kretschmann2019} and \citet[Lemma~A.2]{KlebanovWacker2022}, but it is sufficient for our purposes.
The proof stated here takes advantage of the bound developed in \cref{prop:CM_bound}.

\begin{corollary}
	\label{cor:Gaussian_measure_limits}
	Let $X$ be a separable Banach space equipped with a centred nondegenerate Gaussian measure $\gamma$.
	Suppose that $(x_{n}, r_{n})_{n \in \Naturals} \subset X \times [0, \infty)$ converges to $(x^{\star}, 0)$.
	After passing to a subsequence without relabelling,
	\begin{equation*}
		\limsup_{n \to \infty} \frac{\crcdfm{\gamma}{x_{n}}{r_{n}}}{\crcdfm{\gamma}{x^{\star}}{r_{n}}} \leq 1.
	\end{equation*}
\end{corollary}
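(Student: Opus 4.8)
The plan is to estimate the numerator and denominator separately against the common centred ball mass $\crcdfm{\gamma}{0}{r_n}$, so that this factor cancels in the ratio; the numerator is controlled from above by \Cref{prop:CM_bound}, and the denominator from below by the Cameron--Martin formula.

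First I would bound the numerator using \Cref{prop:CM_bound} directly: for each $n$,
\[
	\frac{\crcdfm{\gamma}{x_n}{r_n}}{\crcdfm{\gamma}{0}{r_n}} \le \exp(-m_n), \qquad m_n \defeq \min_{h \in E \cap \cball{x_n}{r_n}} \tfrac12\norm{h}_E^2,
\]
recording that the minimum is attained at some $h_n^\star \in E \cap \cball{x_n}{r_n}$, which is part of the statement of \Cref{prop:CM_bound}. Since $\norm{h_n^\star - x^\star}_X \le \norm{h_n^\star - x_n}_X + \norm{x_n - x^\star}_X < r_n + \norm{x_n - x^\star}_X \to 0$, these minimisers converge strongly to $x^\star$ in $X$.

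For the denominator I would produce the matching lower bound $\crcdfm{\gamma}{x^\star}{r_n} \ge \exp(-\tfrac12\norm{x^\star}_E^2)\,\crcdfm{\gamma}{0}{r_n}$. Because $x^\star \in E$, the functional $\ell \defeq R_\gamma^{-1}x^\star$ lies in the RKHS, and writing $\crcdfm{\gamma}{x^\star}{r_n} = \gamma_{-x^\star}(\cball{0}{r_n})$ the Cameron--Martin formula \eqref{eq:Cameron--Martin_formula} gives $\crcdfm{\gamma}{x^\star}{r_n} = \exp(-\tfrac12\norm{x^\star}_E^2)\int_{\cball{0}{r_n}}\exp(-\ell(u))\,\gamma(\rd u)$. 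Symmetrising over the origin-symmetric set $\cball{0}{r_n}$ via the substitution $u \mapsto -u$, which preserves $\gamma$ and sends $\ell(u)$ to $-\ell(u)$, together with the elementary inequality $e^{-t}+e^{t}\ge 2$, yields the claimed lower bound. Combining the two estimates gives
\[
	\frac{\crcdfm{\gamma}{x_n}{r_n}}{\crcdfm{\gamma}{x^\star}{r_n}} \le \exp\bigl(\tfrac12\norm{x^\star}_E^2 - m_n\bigr).
\]

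It then remains to show $\liminf_{n} m_n \ge \tfrac12\norm{x^\star}_E^2$, which I expect to be the main obstacle because it requires reconciling the $X$-ball constraint defining the feasible set with the $E$-norm being minimised over it. After passing to the subsequence along which $\norm{h_n^\star}_E$ attains its liminf, either this liminf is infinite, so that the bound is trivial, or $(h_n^\star)$ is bounded in the Hilbert space $E$; in the latter case I would extract a further, weakly $E$-convergent subsequence $h_n^\star \rightharpoonup h_\infty$. Since the embedding $E \hookrightarrow X$ is continuous, hence weak-to-weak continuous, and $h_n^\star \to x^\star$ strongly in $X$, the weak limit must satisfy $h_\infty = x^\star$; weak lower semicontinuity of $\norm{\cdot}_E$ then gives $\norm{x^\star}_E \le \liminf_n \norm{h_n^\star}_E$, i.e.\ $\liminf_n m_n \ge \tfrac12\norm{x^\star}_E^2$. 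Feeding this into the displayed bound yields $\limsup_n \crcdfm{\gamma}{x_n}{r_n}/\crcdfm{\gamma}{x^\star}{r_n} \le 1$, as required; the subsequence extraction is needed only to identify the weak limit $h_\infty$, so the conclusion in fact holds along the full sequence.
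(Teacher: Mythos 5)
Your proof is correct, and its core coincides with the paper's: both arguments bound the numerator against the centred ball via \Cref{prop:CM_bound}, and both establish $\liminf_{n} \tfrac12\norm{h_n^{\star}}_{E}^{2} \geq \tfrac12\norm{x^{\star}}_{E}^{2}$ by extracting an $E$-weakly convergent subsequence of the minimisers, identifying the weak limit with $x^{\star}$ through the embedding of $E$ into $X$, and invoking weak lower semicontinuity of the $E$-norm. Where you genuinely diverge is the denominator: the paper quotes the Gaussian Onsager--Machlup functional \eqref{eq:Gaussian_OM} to obtain the exact asymptotic limit $\lim_{n \to \infty} \crcdfm{\gamma}{0}{r_n} / \crcdfm{\gamma}{x^{\star}}{r_n} = \exp\bigl(\tfrac12\norm{x^{\star}}_{E}^{2}\bigr)$, whereas you derive the one-sided, non-asymptotic bound $\crcdfm{\gamma}{x^{\star}}{r} \geq \exp\bigl(-\tfrac12\norm{x^{\star}}_{E}^{2}\bigr) \crcdfm{\gamma}{0}{r}$ valid for every $r > 0$, using the Cameron--Martin formula together with the symmetrisation $u \mapsto -u$ (which is legitimate: $\ell = R_\gamma^{-1}x^{\star}$ is an $L^{2}(\gamma)$-limit of elements of $X^{\ast}$, hence $\gamma$-a.e.\ odd, and the centred ball and $\gamma$ are both invariant under the reflection). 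This makes your argument more self-contained --- no appeal to the cited OM-functional theorem is needed, and a one-sided bound is all the statement requires. Two further small refinements are worth noting: you correctly observe that mere continuity of the embedding (hence weak-to-weak continuity) suffices to identify the $E$-weak limit with the $X$-strong limit, where the paper invokes compactness of the embedding; and because you route the argument through $\liminf_n m_n$, your conclusion holds along the \emph{full} sequence rather than only after passing to a subsequence --- the subsequence extractions in your proof serve only to prove the liminf inequality, which is a statement about the original sequence, so you in fact prove a slightly stronger result than the one stated.
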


\begin{proof}
	Construct the sequence $(h_n)_{n \in \Naturals} \subset E$ by selecting a minimiser (which exists as argued in the proof of \cref{prop:CM_bound}) of $h \mapsto \|h\|_{E}^{2}$ from $E \cap \cball{x_n}{r_n}$.
	Using the OM functional $I$ defined by \eqref{eq:Gaussian_OM} for $\gamma$, which satisfies $I(0) = 0$, and by applying the upper bound from \cref{prop:CM_bound}, we may write
	\begin{align*}
		\limsup_{n \to \infty} \frac{\crcdfm{\gamma}{x_n}{r_n}}{\crcdfm{\gamma}{x^{\star}}{r_n}} &= \limsup_{n \to \infty} \frac{\crcdfm{\gamma}{x_n}{r_n}}{\crcdfm{\gamma}{0}{r_n}} \lim_{n \to \infty} \frac{\crcdfm{\gamma}{0}{r_n}}{\crcdfm{\gamma}{x^{\star}}{r_n}} \\
		&\leq \limsup_{n \to \infty} \exp\left(-\frac{1}{2}\|h_n\|_{E}^{2} + I(x^{\star})\right).
	\end{align*}
	If $(h_n)_{n \in \Naturals}$ has no $E$-bounded subsequence, then the claim follows immediately as the limit on the right-hand side is zero.
	Otherwise, pass to an $E$-bounded subsequence and, by reflexivity of $E$, pass to a further $E$-weakly convergent subsequence which we do not relabel.
	Since $(x_{n})_{n \in \Naturals} \to x^{\star}$, it follows that $(h_{n})_{n \in \Naturals} \to x^{\star}$ in $X$ as $\norm{h_{n} - x_{n}}_X \leq r_{n}$, and by the compact embedding of $E$ in $X$, the $E$-weak limit of $(h_n)_{n \in \Naturals}$ must agree with the $X$-strong limit.
	Hence, $(h_n)_{n \in \Naturals} \rightharpoonup x^{\star}$ weakly in $E$, and as the Cameron--Martin norm is $E$-weakly lower semicontinuous,
	\begin{equation*}
		\limsup_{n \to \infty} \exp\left(-\frac{1}{2}\norm{h_{n}}_{E}^{2}\right) \leq \exp\left(-\frac{1}{2}\|x^{\star}\|_{E}^{2} \right) = \exp(-I(x^{\star})). \qedhere
	\end{equation*}
\end{proof}

The next result establishes a technical approximation condition for sequences in $X$ by elements of $E$, which is useful in combination with \cref{prop:CM_bound}, and applies it to establish property $M(\gamma, E)$ for Gaussian measures on Banach spaces.
As discussed in \cref{sec:notation}, this extends previous results which establish the $M$-property when $X$ is a separable Hilbert space or when $X = \ell^p$, $1 \leq p < \infty$, and $\gamma$ is a diagonal Gaussian measure.
In particular, this is a natural analogue for Banach spaces of Corollary~3.8 of \citet{DashtiLawStuartVoss2013}, which proves the $M$-property in separable Hilbert spaces.

\begin{corollary}
	\label{cor:Gaussian_measure_M-prop}
	Let $X$ be a separable Banach space equipped with a centred nondegenerate Gaussian measure $\gamma$.
	\begin{enumerate}[label=(\alph*)]
		\item
		\label{item:Gaussian_measure_seq_limit_point}
		Let $(r_n)_{n \in \Naturals} \to 0$ and $(x_{n})_{n \in \Naturals} \subset X$.
		If
		\begin{equation*}
			\liminf_{n \to \infty} \min_{h \in E \cap \cball{x_{n}}{r_n}} \frac{1}{2}\norm{h}_{E}^{2} < \infty,
		\end{equation*}
		then $(x_{n})_{n \in \Naturals}$ has an $X$-strong limit point which lies in $E$.

		\item
		\label{item:Gaussian_measure_M-prop}
		Property $M(\gamma, E)$ holds.
	\end{enumerate}
\end{corollary}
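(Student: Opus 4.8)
The plan is to prove part \ref{item:Gaussian_measure_seq_limit_point} by a weak-compactness argument in the Cameron--Martin space, and then to deduce part \ref{item:Gaussian_measure_M-prop} by combining it with the bound of \Cref{prop:CM_bound}, taking $x^{\star} = 0$.

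For \ref{item:Gaussian_measure_seq_limit_point}, I would first select, for each $n$, a minimiser $h_n \in E \cap \cball{x_n}{r_n}$ of $h \mapsto \tfrac{1}{2}\norm{h}_{E}^{2}$; such a minimiser exists by the argument already used in the proof of \Cref{prop:CM_bound} (the feasible set is nonempty since $\gamma$ is nondegenerate and $E$ is dense in $X$, and the coercive, $E$-weakly lower semicontinuous norm attains its minimum on the $E$-closed convex, hence $E$-weakly closed, set). The hypothesis $\liminf_n \tfrac{1}{2}\norm{h_n}_{E}^{2} < \infty$ lets me pass to a subsequence along which $(h_n)$ is $E$-bounded. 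Since $E$ is a Hilbert space, hence reflexive, I would extract a further subsequence with $h_n \rightharpoonup h^{\star}$ weakly in $E$ for some $h^{\star} \in E$. The compact embedding $E \hookrightarrow X$ upgrades this to strong convergence $h_n \to h^{\star}$ in $X$. Finally, since $\norm{h_n - x_n}_X \leq r_n \to 0$, the triangle inequality gives $x_n \to h^{\star}$ strongly in $X$ along this subsequence, so $h^{\star} \in E$ is the required $X$-strong limit point.

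For \ref{item:Gaussian_measure_M-prop}, I would take $x^{\star} = 0 \in E$, which carries the maximal ball mass by Anderson's inequality \eqref{eq:Anderson_inequality}, and show that $\lim_{r \to 0} \crcdfm{\gamma}{x}{r}/\crcdfm{\gamma}{0}{r} = 0$ for every $x \notin E$. Writing $m(r) \defeq \min_{h \in E \cap \cball{x}{r}} \tfrac{1}{2}\norm{h}_{E}^{2}$, \Cref{prop:CM_bound} bounds this ratio by $\exp(-m(r))$, so it suffices to show $m(r) \to \infty$ as $r \to 0$. The quantity $m$ is nonincreasing in $r$ because the feasible set shrinks as $r$ decreases, so $\lim_{r \to 0} m(r)$ exists in $(0, \infty]$. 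If this limit were finite, then for any sequence $r_n \to 0$ the constant sequence $x_n \equiv x$ would satisfy the hypothesis of part \ref{item:Gaussian_measure_seq_limit_point}, forcing its unique limit point $x$ to lie in $E$ and contradicting $x \notin E$. Hence $m(r) \to \infty$ and the ratio vanishes, which is exactly property $M(\gamma, E)$.

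The main obstacle is the functional-analytic core of part \ref{item:Gaussian_measure_seq_limit_point}: securing existence of the minimisers and, above all, upgrading $E$-weak convergence to $X$-strong convergence, for which the \emph{compactness} of the Cameron--Martin embedding is essential (mere continuity would not suffice). Once this is in place, part \ref{item:Gaussian_measure_M-prop} follows cleanly, with the only care needed being the monotonicity of $m$ and the reduction to a constant sequence.
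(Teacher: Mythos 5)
Your proposal is correct and follows essentially the same route as the paper: select norm-minimisers $h_n \in E \cap \cball{x_n}{r_n}$, use $E$-boundedness plus reflexivity to extract an $E$-weakly convergent subsequence, upgrade to $X$-strong convergence via the compact embedding, and then derive the $M$-property by applying part (a) to the constant sequence $x_n \equiv x$ together with \Cref{prop:CM_bound}. The only (harmless) additions beyond the paper's argument are your explicit monotonicity observation for $m(r)$, which the paper handles implicitly when passing from sequences to the limit $r \to 0$, and the unneeded invocation of Anderson's inequality.
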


\begin{proof}
	\begin{enumerate}[label=(\alph*)]
		\item
		By hypothesis, there must exist a subsequence $(r_{n_k})_{k \in \Naturals}$ and a sequence $(h_{n_k})_{k \in \Naturals} \subset E$ that is uniformly bounded in $E$ such that $\norm{h_{n_k} - x_{n_k}}_X \leq r_{n_k}$.
		Pass to an $E$-weakly convergent subsequence of $(h_{n_k})_{k \in \Naturals}$ with limit $h^{\star} \in E$;
		the compact embedding of $E$ in $X$ implies that $(h_{n_k})_{k \in \Naturals}$ converges strongly in $X$ to $h^\star$.
		As $\norm{h_{n_k} - x_{n_k}}_X \leq r_{n_k}$, this implies that $(x_{n_k})_{k \in \Naturals} \to h^{\star}$ strongly in $X$.

		\item
		Let $x \notin E$.
		The constant sequence $(x)_{n \in \Naturals}$ cannot have a limit point in $E$, so by \ref{item:Gaussian_measure_seq_limit_point}, for any sequence $(r_n)_{n \in \Naturals} \to 0$,
		\begin{equation*}
			\lim_{n \to \infty} \min_{h \in E \cap \cball{x}{r_n}} \frac{1}{2}\|h\|_{E}^{2} = \infty.
		\end{equation*}
		Thus, by \cref{prop:CM_bound}, the $M$-property holds because
		\begin{equation*}
			\limsup_{r \to 0} \frac{\crcdfm{\gamma}{x}{r}}{\crcdfm{\gamma}{0}{r}} \leq \limsup_{r \to 0} \exp\left( -\min_{h \in E \cap \cball{x}{r}} \frac{1}{2} \norm{h}_{E}^{2} \right) = 0.
			\qedhere
		\end{equation*}
	\end{enumerate}
\end{proof}

\section{Existence of MAP estimators}
\label{sec:map_estimators}

\subsection{Weak MAP estimators}

With the $M$-property established for Gaussian measures on a separable Banach space, it is now possible to provide a short proof of the existence of weak MAP estimators for Bayesian posteriors of the form \eqref{eq:Bayesian_posterior}.
One could prove the existence of strong MAP estimators directly, as in \citet{DashtiLawStuartVoss2013}, and use the fact that all strong modes are weak modes, but it is instructive to prove the existence of weak MAP estimators separately.
Though weak modes were not proposed until the work of \citet{HelinBurger2015}, Corollary~3.8 of \citet{DashtiLawStuartVoss2013} already proved what is now called the $M$-property for Gaussian priors on Hilbert spaces, taking an important step towards showing the existence of weak modes.

By \cref{prop:OM_minimisers_are_modes}, it is sufficient to minimise the posterior OM functional $I^y$, and it is well known that $I^y$ does indeed have a minimiser \citep[see e.g.][Theorem~5.4]{Stuart2010} under mild conditions.

In particular, we only require \emph{coercivity} of $I^y$ in $E$ to obtain weak modes rather than the lower bound on $\Phi$ needed in \cref{thm:main}.
It is important to note that without the lower bound on $\Phi$, it may not be possible to normalise the measure defined in \eqref{eq:Bayesian_posterior} as \cref{thm:Phi_normalisable} need not hold;
the following result considers only measures which \emph{can} be normalised.

Observe also that the hypotheses of \cref{thm:main} always imply $E$-coercivity of $I^{y}$:
using the compact embedding \eqref{eq:compact_embedding} of $E$ in $X$ and the lower bound \eqref{eq:Phi_lower_bound} gives
\begin{equation*}
	I^{y}(u) \defeq \Phi(u) + \frac{1}{2} \norm{u}_{E}^2 \geq K(\eta) + \left(\frac{1}{2} - C^2\eta\right)\norm{u}_{E}^2 \text{~~for all $u \in E$,}
\end{equation*}
and selecting $\eta > 0$ sufficiently small ensures that $\tfrac{1}{2} - C^2 \eta > 0$.

It is not clear whether coercivity is sufficient to obtain a strong mode, and this question is left to future work.

\begin{proposition}[Weak MAP estimators for Bayesian posteriors with Gaussian priors]
	\label{prop:weak_modes}
	Let $X$ be a separable Banach space and let $\mu_0$ be a centred nondegenerate Gaussian measure.
	Suppose that $\mu^y$ is a \emph{probability measure} of the form \eqref{eq:Bayesian_posterior} for some continuous potential $\Phi \colon X \to \Reals$.
	Suppose also that the posterior OM functional $I^y(u) \defeq \Phi(u) + \frac{1}{2}\norm{u}_E^2$ is \emph{$E$-coercive}, i.e.\ there exists $A \in \Reals$ and $c > 0$ such that 
	\begin{equation*}
		A + I^y(u) \geq c\norm{u}_{E}^{2} \text{~~for all $u \in E$},
	\end{equation*}
	or equivalently, using the definition of $I^{y}$,
	\begin{equation*}
		A + \Phi(u) > -\frac{1}{2} \norm{u}_{E}^2 \text{~~for all $u \in E$.}
	\end{equation*}
	Then $\mu^y$ has a weak mode.
\end{proposition}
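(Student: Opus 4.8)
The plan is to reduce the existence of a weak mode to the existence of a minimiser of the posterior OM functional $I^y$, and then to produce such a minimiser by the direct method of the calculus of variations.

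The first step is to assemble the structural machinery already established. Because $\mu_0$ is a centred nondegenerate Gaussian measure, its OM functional is $I_0(u) = \tfrac{1}{2}\norm{u}_E^2$ by \eqref{eq:Gaussian_OM}, and property $M(\mu_0, E)$ holds by \Cref{cor:Gaussian_measure_M-prop}. I would then apply \Cref{prop:OM_minimisers_are_modes}: the posterior $\mu^y$ has OM functional $I^y(u) = \tfrac{1}{2}\norm{u}_E^2 + \Phi(u)$, property $M(\mu^y, E)$ holds, and a point $x^{\star} \in X$ is a weak mode of $\mu^y$ if and only if $x^{\star} \in E$ and $x^{\star}$ minimises $I^y$. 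It therefore suffices to show that $I^y$ attains its infimum on $E$.

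The direct method then proceeds as follows. Coercivity gives $I^y(u) \geq c\norm{u}_E^2 - A \geq -A$, so $I^y$ is bounded below, while $I^y(0) = \Phi(0) < \infty$ shows the infimum is finite; hence a minimising sequence $(u_n)_{n \in \Naturals} \subset E$ exists. Since $I^y(u_n)$ is eventually bounded above, the coercivity bound $A + I^y(u_n) \geq c\norm{u_n}_E^2$ forces $(\norm{u_n}_E)_{n \in \Naturals}$ to be bounded. As $E$ is a Hilbert space, hence reflexive, I would extract a subsequence converging weakly in $E$ to some $u^{\star} \in E$. The norm term $u \mapsto \tfrac{1}{2}\norm{u}_E^2$ is $E$-weakly lower semicontinuous, so it only remains to control the behaviour of $\Phi$ along the subsequence.

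This last point is where the weak hypotheses on $\Phi$ are exploited, and is the only genuine obstacle. Since $\Phi$ is assumed merely continuous on $X$, one cannot invoke weak lower semicontinuity on $E$ directly. The remedy is the compact embedding of $E$ in $X$: weak convergence $u_n \rightharpoonup u^{\star}$ in $E$ upgrades to strong convergence $u_n \to u^{\star}$ in $X$, whence $\Phi(u_n) \to \Phi(u^{\star})$ by continuity. Combining this with the weak lower semicontinuity of the norm term yields $I^y(u^{\star}) \leq \liminf_{n \to \infty} I^y(u_n) = \inf_{u \in E} I^y(u)$, so that $u^{\star}$ is a minimiser and hence, by \Cref{prop:OM_minimisers_are_modes}, a weak mode of $\mu^y$.
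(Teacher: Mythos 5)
Your proposal is correct and follows essentially the same route as the paper's own proof: reduction to minimising $I^y$ via \Cref{cor:Gaussian_measure_M-prop} and \Cref{prop:OM_minimisers_are_modes}, then the direct method, using the compact embedding of $E$ in $X$ to upgrade $E$-weak convergence to $X$-strong convergence so that continuity of $\Phi$ yields $E$-weak continuity (and hence lower semicontinuity) of $I^y$. The only difference is cosmetic: you spell out the finiteness of the infimum and the compactness argument slightly more explicitly than the paper does.
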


\begin{proof}
	The prior $\mu_0$ has OM functional $I_0(u) = \frac{1}{2}\norm{u}_{E}^{2}$ as described in \eqref{eq:Gaussian_OM} and \cref{cor:Gaussian_measure_M-prop} proves that property $M(\mu_0, E)$ holds.
	Hence, by \cref{prop:OM_minimisers_are_modes}, the posterior has OM functional $I^y(u) = \frac{1}{2} \norm{u}_{E}^{2} + \Phi(u)$ and property $M(\mu^y, E)$ holds, and furthermore weak modes coincide with minimisers of $I^y$.
	It remains to show that $I^y$ does have a minimiser.

	First, note that $\Phi$ is $E$-weakly continuous: 
	if $u_n \rightharpoonup u$ weakly in $E$, then by the compact embedding $u_n \to u$ strongly in $X$ and thus $\Phi(u_n) \to \Phi(u)$ by strong continuity of $\Phi$ in $X$.
	As the $E$-norm is also clearly weakly lower semicontinuous, the OM functional $I^y$ must be $E$-weakly lower semicontinuous.
	As $I^y$ is also coercive, it has a minimiser in $E$ by the direct method of the calculus of variations:
	take a sequence $(h_n)_{n \in \Naturals} \subset E$ with $I^y(h_n) < \inf_{u \in E} I^y(u) + \tfrac{1}{n}$, and observe that it is $E$-bounded by coercivity;
	passing to an $E$-weakly convergent subsequence with limit $h^{\star}$ and using the weak lower semicontinuity of $I^y$ proves that $h^{\star}$ is a minimiser of $I^y$.
	This minimiser is a weak mode by \cref{prop:OM_minimisers_are_modes}.
\end{proof}

\subsection{Strong MAP estimators}
We now prove the main theorem on the existence of strong MAP estimators.
The strategy of the proof is similar in spirit to the prior work of \citet{DashtiLawStuartVoss2013}, \citet{Kretschmann2019,Kretschmann2022} and \citet{KlebanovWacker2022}.

In the proof of \citet{DashtiLawStuartVoss2013}, the explicit Anderson inequality \eqref{eq:explicit_Anderson_inequality_in_X} is first used to show that any family $(x_{r}^{\star})_{r > 0}$ of maximisers of the posterior radius-$r$ ball mass $x \mapsto \mu^{y}(\cball{x}{r})$ must be bounded in $X$ under some regularity assumptions on $\Phi$. 
Next, a weakly convergent subsequence is extracted, and Lemma~3.7 and Lemma~3.9 of \citet{DashtiLawStuartVoss2013} can be used to show that if the limit is not in $E$ or the convergence is not strong, then
\begin{equation*}
	\frac{\gamma(\cball{x_{r}^{\star}}{r})}{\gamma(\cball{0}{r})} \to 0 \text{~~~~as $r \to 0$.}
\end{equation*}
This yields a contradiction because the assumptions on $\Phi$ mean this ratio cannot converge to zero, showing that the limit point lies in $E$ and convergence is strong in $X$.
Finally, this limit point is shown to be both a strong MAP estimator and an OM minimiser.

\citet{KlebanovWacker2022} point out that it is not obvious that the radius-$r$ maximisers exist and show that the proof can be adapted to use a family $(x_{r})_{r > 0}$ of ``approximate maximisers'' nearly attaining the supremal radius-$r$ mass instead.
\citet{KlebanovWacker2022} call such a family an \defterm{asymptotic maximising family}.

\begin{definition}
	\label{def:AMF}

	Let $X$ be a metric space.
	An \defterm{asymptotic maximising family} (AMF) for $\mu \in \prob{X}$ is a net $(x_r)_{r > 0} \subset X$ such that, for some increasing function $\varepsilon \colon [0, \infty) \to [0, 1)$ with $\lim_{r \to 0} \varepsilon(r) = 0$,
	\begin{equation*}
		\crcdf{x_r}{r} \geq \bigl(1 - \varepsilon(r)\bigr) M_r.
	\end{equation*}
\end{definition}

Every measure has at least one AMF, though in general there may not exist any point $x_{r}^{\star} \in X$ such that $\crcdf{x_r^{\star}}{r} = M_r$.
If $X$ is a Hilbert space, then the radius-$r$ maximisers $x_{r}^{\star}$ do always exist \citep[Corollary~A.9]{LambleySullivan2023}, but we will use AMFs to avoid further discussion about these maximisers.
The next result summarises the connection between AMFs and small-ball modes, which is explored in greater detail by \citet[Theorem~4.11]{LambleySullivan2023}.

\begin{proposition}
	\label{prop:generalised_strong_AMF}
	Let $X$ be a separable Banach space and suppose that $\mu \in \prob{X}$.
	\begin{enumerate}[label=(\alph*)]
		\item Suppose that $x^{\star}$ is a generalised strong mode for $\mu$.
		Then $x^\star$ is a limit point of some AMF $(x_{r})_{r > 0} \subset X$.

		\item Suppose that $(x_{r})_{r > 0} \subset X$ is an AMF for $\mu$ which converges to $x^{\star}$ along every subsequence.
		Then $x^{\star}$ is a generalised strong mode.
	\end{enumerate}
\end{proposition}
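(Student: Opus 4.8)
The two statements are near-converses and I would establish them separately, beginning with (b), which is essentially a direct unwinding of the definitions. Suppose $(x_r)_{r>0}$ is an AMF converging to $x^\star$ along every sequence $r \to 0$, and fix an arbitrary $(r_n)_{n \in \Naturals} \to 0$. Taking $x_n \defeq x_{r_n}$, the hypothesis gives $x_n \to x^\star$, while the AMF bound (\Cref{def:AMF}) combined with $\crcdf{x_{r_n}}{r_n} \le M_{r_n}$ yields
\begin{equation*}
	1 - \varepsilon(r_n) \le \frac{\crcdf{x_{r_n}}{r_n}}{M_{r_n}} \le 1,
\end{equation*}
so the ratio tends to $1$ since $\varepsilon(r_n) \to 0$. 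The only subtlety is to confirm $x^\star \in \supp(\mu)$, as required by \Cref{def:generalised_mode}: for fixed $\rho > 0$ and $n$ large enough that $r_n < \rho/2$ and $\norm{x_{r_n} - x^\star}_X < \rho/2$, we have $\cball{x_{r_n}}{r_n} \subseteq \cball{x^\star}{\rho}$, so $\crcdf{x^\star}{\rho} \ge (1 - \varepsilon(r_n)) M_{r_n} > 0$, using that $M_{r_n} > 0$ because $\supp(\mu) \ne \emptyset$.

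For (a), the plan is to build the AMF explicitly. Applying \Cref{def:generalised_mode} to the distinguished sequence $r_n \defeq 1/n$ produces $x_n \to x^\star$ with $\crcdf{x_n}{r_n}/M_{r_n} \to 1$; set $\varepsilon_n \defeq 1 - \crcdf{x_n}{r_n}/M_{r_n} \to 0$. I would define the net by putting $x_{r_n} \defeq x_n$ at the radii $r_n$ and, for every other $r$, choosing $x_r$ to be an arbitrary near-maximiser, i.e.\ a point with $\crcdf{x_r}{r}$ within a prescribed tolerance of $M_r$, which is possible since $M_r > 0$ for all $r > 0$. Because $x_{r_n} = x_n \to x^\star$ while $r_n \to 0$, the point $x^\star$ is a cluster point of the net, i.e.\ a limit point of the AMF in the sense required.

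The remaining and most delicate step is to produce a single increasing function $\varepsilon \colon [0,\infty) \to [0,\infty)$ with $\varepsilon(r) \to 0$ that certifies the AMF bound at every radius at once; this bookkeeping is where I expect the only real difficulty to lie. The obstruction is that the forced errors $\varepsilon_n$ at the distinguished radii need not be monotone, whereas at the filled-in radii the error can be taken arbitrarily small. I would resolve this by passing to the decreasing envelope $\bar\varepsilon_n \defeq \sup_{k \ge n} \varepsilon_k$, which still tends to $0$, and setting $\varepsilon$ equal to $\bar\varepsilon_n + 1/n$ on each interval $(r_{n+1}, r_n]$ (with any increasing extension to $r \ge 1$); the extra $1/n$ keeps $\varepsilon$ strictly positive so the filled-in near-maximisers can always be chosen within tolerance. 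Since $\bar\varepsilon_n + 1/n$ decreases in $n$, the function $\varepsilon$ is increasing in $r$ and tends to $0$ as $r \to 0$. One then checks the AMF inequality directly: at $r = r_n$ it holds because $\varepsilon(r_n) \ge \varepsilon_n$, and at the filled-in radii it holds by the choice of near-maximiser. With $\varepsilon$ in hand, $(x_r)_{r>0}$ is a genuine AMF with $x^\star$ as a limit point, completing (a).
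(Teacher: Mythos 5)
Your proposal is correct and follows essentially the same route as the paper: part (b) is the same direct unwinding of \Cref{def:AMF,def:generalised_mode}, and part (a) builds an AMF passing through the points supplied by the generalised-strong-mode definition. In fact you supply details the paper compresses into one sentence (``selecting any AMF with this subsequence'') --- namely the monotone-envelope construction of the single increasing $\varepsilon$ and the verification that $x^{\star} \in \supp(\mu)$ --- both of which are handled correctly.
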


\begin{proof}
	Pick any sequence $(r_{n})_{n \in \Naturals} \to 0$ and choose a corresponding sequence $(x_{r_n})_{n \in \Naturals} \to x^{\star}$ from the definition of a generalised strong mode (\Cref{def:generalised_mode}).
	Selecting any AMF $(x_{r})_{r > 0}$ with this subsequence $(x_{r_n})_{n \in \Naturals}$ proves the first claim.
	For the second claim, let $\varepsilon \colon [0, \infty) \to [0, 1)$ denote the function corresponding to the AMF $(x_{r})_{r > 0}$;
	for any sequence $(r_{n})_{n \in \Naturals} \to 0$, it follows by definition that
	\begin{equation*}
		\lim_{n \to \infty} \frac{\crcdf{x_{r_n}}{r_n}}{M_{r_n}} = \lim_{n \to \infty} 1 - \varepsilon(r_n) = 1,
	\end{equation*}
	proving that $x^\star$ is a generalised strong mode.
\end{proof}

Aside from the issues associated with radius-$r$ maximisers, the proof of \citet{DashtiLawStuartVoss2013} omits some technical results which were later proved by \citet{Kretschmann2019}.
\citet{KlebanovWacker2022} argue that the proof also relies on several properties that do not hold in an arbitrary separable Banach space $X$.
To give just one example, the step passing from a bounded sequence to a weakly convergent subsequence requires additional hypotheses, e.g.\ reflexivity of $X$.

To resolve this, \citet{KlebanovWacker2022} first establish the proof when $X$ is a separable Hilbert space. 
In this setting, any Gaussian measure $\gamma$ is characterised by its mean $m \in X$ and covariance operator $\mathcal{C} \colon X \to X$, so by working in an eigenbasis of $\mathcal{C}$, one can reduce to the case $X = \ell^{2}(\Naturals; \Reals)$ with $\mu_{0} = \bigotimes_{n \in \Naturals} N(0, \sigma_{n}^{2})$, with the Cameron--Martin norm given by a simple reweighting of the $\ell^{2}$-norm.
\citet{KlebanovWacker2022} then extend to the case $X = \ell^{p}(\Naturals; \Reals)$, $1 \leq p < \infty$, with $\mu_{0} = \bigotimes_{n \in \Naturals} N(0, \sigma_{n}^{2})$;
unlike in the Hilbert case, not all Gaussian measures on $\ell^{p}$ can be expressed in this product form.
Even this generalisation is nontrivial since the Cameron--Martin norm can no longer be expressed as a reweighting of the $X$-norm.
This motivates a technical convexification argument to bridge the gap between the two norms, making use of the diagonal structure of the prior to write the $E$-norm in terms of the canonical sequence-space basis.
It is challenging to generalise this approach further given the heavy dependence on the diagonal structure.

We overcome this difficulty by using the explicit Anderson inequality of \cref{prop:CM_bound}.
As discused in \cref{sec:small-ball_Gaussian_Banach}, this is more natural than the bound \eqref{eq:explicit_Anderson_inequality_in_X} used in prior work because the behaviour of $\gamma$ is fully determined by its Cameron--Martin space, and the Cameron--Martin space has more favourable topological properties.
\cref{prop:CM_bound} first allows us to show that any AMF is bounded in $X$, and \cref{cor:Gaussian_measure_M-prop} shows that any AMF is closely approximated in $X$ by a sequence bounded \emph{in $E$}.
This sequence has an $E$-weakly convergent subsequence regardless of the choice of $X$, and applying the compact embedding of $E$ in $X$ yields strong convergence of this subsequence in $X$.

This approach avoids the need to explicitly prove Lemma~3.7 and Lemma~3.9 of \citet{DashtiLawStuartVoss2013}, since the necessary claims can be derived directly from \cref{prop:CM_bound} and \cref{cor:Gaussian_measure_M-prop}. 

We will later show in \cref{lem:AMF_limit_point_is_strong} that a limit point of an AMF is a strong mode for the Bayesian posterior $\mu^{y}$;
combining this result with the existence of limit points proven in the following result completes the proof of \cref{thm:main}.

\begin{lemma}[Limit points of AMFs for Bayesian posteriors]
	\label{lem:AMF_has_limit_point}
	Under the assumptions of \cref{thm:main}, if $(x_r)_{r > 0} \subset X$ is an AMF for $\mu^y$, then:
	\begin{enumerate}[label=(\alph*)]
		\item any limit point of $(x_r)_{r > 0}$ lies in $E$;
		\item the net $(x_r)_{r > 0}$ has at least one limit point.
	\end{enumerate}
\end{lemma}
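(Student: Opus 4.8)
The plan is to reduce both parts to a single uniform estimate: that
\[
  m_r \defeq \min_{h \in E \cap \cball{x_r}{r}} \tfrac{1}{2}\norm{h}_E^2
\]
remains bounded as $r \to 0$. Once this is in hand, both claims follow from (the argument behind) \Cref{cor:Gaussian_measure_M-prop}\ref{item:Gaussian_measure_seq_limit_point}. First I would convert the defining inequality of the AMF, which concerns the posterior mass, into a statement about the prior so that \Cref{prop:CM_bound} applies. For the upper bound, the growth hypothesis on $\Phi$ together with $\norm{u}_X \le \norm{x_r}_X + r$ on $\cball{x_r}{r}$ gives, for each $\eta > 0$,
\[
  \crcdfm{\mu^y}{x_r}{r} \le \exp\bigl(-K(\eta) + \eta(\norm{x_r}_X + r)^2\bigr)\,\crcdfm{\mu_0}{x_r}{r},
\]
and \Cref{prop:CM_bound} applied to the prior further bounds $\crcdfm{\mu_0}{x_r}{r} \le \exp(-m_r)\,\crcdfm{\mu_0}{0}{r}$. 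For a matching lower bound I would use continuity of $\Phi$ at the origin to write $M_r \ge \crcdfm{\mu^y}{0}{r} \ge \exp(-\Phi(0)-1)\,\crcdfm{\mu_0}{0}{r}$ for all small $r$.

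Combining these with the AMF inequality $\crcdfm{\mu^y}{x_r}{r} \ge (1 - \varepsilon(r)) M_r$, the common factor $\crcdfm{\mu_0}{0}{r}$ cancels, and taking logarithms leaves an estimate of the shape
\[
  m_r \le \eta(\norm{x_r}_X + r)^2 + C_0 - \log\bigl(1 - \varepsilon(r)\bigr), \qquad C_0 = \Phi(0) + 1 - K(\eta).
\]
The main obstacle is that $\norm{x_r}_X$ is not controlled a priori yet appears on the right. The step that closes the argument is to bound it through the minimiser itself: if $h^{\star} \in E \cap \cball{x_r}{r}$ realises $m_r$, then the compact embedding $E \hookrightarrow X$ gives $\norm{x_r}_X \le \norm{h^{\star}}_X + r \le C\norm{h^{\star}}_E + r = C\sqrt{2 m_r} + r$. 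Substituting this back produces an inequality of the form $m_r \le 2C^2\eta\, m_r + a\sqrt{m_r} + b$, where $a, b$ are finite and, for $r$ below some $r_0$, uniformly bounded (here $\varepsilon(r) \to 0$ keeps $-\log(1 - \varepsilon(r))$ bounded). Choosing $\eta$ small enough that $2C^2\eta < 1$ absorbs the $m_r$ term on the right into the left-hand side, leaving a quadratic inequality in $\sqrt{m_r}$ with bounded solution set; this yields $\sup_{0 < r < r_0} m_r < \infty$.

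With the uniform bound established, I would conclude as follows. For part (a), let $x^{\star}$ be any limit point of the net and extract a sequence $r_n \to 0$ with $x_{r_n} \to x^{\star}$ in $X$. The corresponding minimisers $h_n^{\star}$ satisfy $\tfrac12\norm{h_n^{\star}}_E^2 = m_{r_n} \le \sup_{r < r_0} m_r < \infty$, so $(h_n^{\star})_{n \in \Naturals}$ is $E$-bounded; passing to an $E$-weakly convergent subsequence with limit $h^{\star} \in E$ and invoking the compact embedding gives $h_n^{\star} \to h^{\star}$ strongly in $X$. Since $\norm{h_n^{\star} - x_{r_n}}_X \le r_n \to 0$, we also have $h_n^{\star} \to x^{\star}$, whence $x^{\star} = h^{\star} \in E$ by uniqueness of limits. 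For part (b), pick any sequence $r_n \to 0$; the uniform bound gives $\liminf_{n \to \infty} m_{r_n} < \infty$, so \Cref{cor:Gaussian_measure_M-prop}\ref{item:Gaussian_measure_seq_limit_point} produces an $X$-strong limit point of $(x_{r_n})_{n \in \Naturals}$ lying in $E$, and this point is a cluster point of the net because $r_n \to 0$. The delicate point throughout is the self-referential estimate on $m_r$: it is precisely the freedom to take $\eta$ arbitrarily small in the growth condition — matching the quadratic Cameron--Martin penalty against the compact-embedding constant $C$ — that makes the absorption possible.
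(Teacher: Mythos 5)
Your proposal is correct and is essentially the paper's own argument: the same transfer of the AMF inequality from $\mu^y$ to $\mu_0$ (growth bound on $\Phi$ on $\cball{x_r}{r}$ plus continuity of $\Phi$ at $0$), the same combination of \Cref{prop:CM_bound} with the compact embedding and the smallness threshold $2C^2\eta < 1$, and the same appeal to \Cref{cor:Gaussian_measure_M-prop}\ref{item:Gaussian_measure_seq_limit_point} to produce a limit point lying in $E$. The only difference is bookkeeping: you retain $m_r$ and close a self-referential inequality by absorption to get $\sup_{0<r<r_0} m_r < \infty$, whereas the paper eliminates $m_r$ using $\tfrac{1}{2C^{2}}(\norm{x_r}_X - r)^2 \le m_r$ --- which is exactly your bound $\norm{x_r}_X \le C\sqrt{2m_r} + r$ rearranged --- and rules out unboundedness of $\norm{x_{r_n}}_X$ by contradiction; the two closures are equivalent.
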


\begin{proof}
	Fix any decreasing sequence $(r_n)_{n \in \Naturals} \to 0$.
	By \cref{prop:CM_bound} and the compact embedding \eqref{eq:compact_embedding}, we have
	\begin{equation}\label{eq:AMF_bounded_upper_bound}
		\frac{\mu_0(\cball{x_{r_n}}{r_n})}{\mu_0(\cball{0}{r_n})} \leq \exp\left(-\frac{1}{2C^{2}} \min_{h \in E \cap \cball{x_{r_n}}{r_n}} \|h\|_X^{2}\right) \leq \exp\left(-\frac{1}{2C^{2}} \inf_{x \in \cball{x_{r_n}}{r_{n}}} \norm{x}_{X}^{2} \right).
	\end{equation}
	On the other hand, let $\varepsilon$ be the function corresponding to the AMF $(x_{r})_{r > 0}$;
	using the lower bound \eqref{eq:Phi_lower_bound} on $\Phi$ and picking $\delta > 0$ from the definition of continuity such that $\absval{\Phi(x) - \Phi(0)} < 1$ for $\absval{x} < \delta$, we see that for all $n$ such that $r_n < \delta$, the following lower bound holds:
	\begin{align*}
		\exp\left(-K(\eta) + \eta \sup_{x \in \cball{x_{r_n}}{r_{n}}} \norm{x}_{X}^2\right) \mu_0(\cball{x_{r_n}}{r_n})
		&\geq \mu^y(\cball{x_{r_n}}{r_n}) \\
		&\geq \bigl(1 - \varepsilon(r_n)\bigr) M_{r_n} \\
		&\geq \bigl(1-\varepsilon(r_n)\bigr) \int_{\cball{0}{r_n}} \exp\bigl(-\Phi(x)\bigr)\,\mu_0(\rd x) \\
		&\geq \bigl(1 - \varepsilon(r_n)\bigr) \exp\bigl(-\Phi(0) - 1\bigr) \mu_0(\cball{0}{r_n}).
	\end{align*}
	This inequality gives
	\begin{equation} \label{eq:AMF_bounded_lower_bound}
		\frac{\mu_0(\cball{x_{r_n}}{r_n})}{\mu_0(\cball{0}{r_n})} \geq \bigl(1 - \varepsilon(r_n)\bigr) \exp\left(K(\eta) - \Phi(0) - 1 -\eta \sup_{x \in \cball{x_{r_n}}{r_{n}}} \norm{x}_{X}^{2} \right),
	\end{equation}
	and combining this bound with \eqref{eq:AMF_bounded_upper_bound} yields
	\begin{equation*}
		 \bigl(1-\varepsilon(r_n)\bigr)\exp\bigl(K(\eta) - \Phi(0) - 1\bigr) \leq \exp\left(\eta \sup_{x \in \cball{x_{r_n}}{r_{n}}} \norm{x}_{X}^{2} - \frac{1}{2C^{2}} \inf_{x \in \cball{x_{r_n}}{r_{n}}} \norm{x}_{X}^2 \right).
	\end{equation*}
	This implies that the sequence $(x_{r_n})_{n \in \Naturals}$ is bounded:
	if it were not, then setting $\eta < \tfrac{1}{2C^{2}}$ would give the contradiction
	\begin{align*}
		0 &< \exp\bigl(K(\eta) - \Phi(0) - 1\bigr) \\
	      &\leq \liminf_{n \to \infty} \exp\left(\eta \sup_{x \in \cball{x_{r_n}}{r_{n}}} \norm{x}_{X}^{2} - \frac{1}{2C^{2}} \inf_{x \in \cball{x_{r_n}}{r_{n}}} \norm{x}_{X}^2 \right) \\
		  &\leq \liminf_{n \to \infty} \exp\left(\eta \left(\inf_{x \in \cball{x_{r_n}}{r_{n}}} \norm{x}_{X} + 2r_{n}\right)^{2} - \frac{1}{2C^{2}} \inf_{x \in \cball{x_{r_n}}{r_{n}}} \norm{x}_{X}^2 \right) = 0.
	\end{align*}
	As $(x_{r_{n}})_{n \in \Naturals}$ is bounded, \eqref{eq:AMF_bounded_lower_bound} implies that there is a constant $L > 0$ such that 
	\begin{equation*}
		0 < L \bigl(1 - \varepsilon(r_n)\bigr) \leq \frac{\mu_0(\cball{x_{r_{n}}}{r_{n}})}{\mu_0(\cball{0}{r_{n}})}.
	\end{equation*}
	Thus, again using the upper bound provided by \cref{prop:CM_bound}, we see that 
	\begin{equation*}
		0 < \liminf_{n \to \infty} L \bigl(1 - \varepsilon(r_n)\bigr) \leq \liminf_{n \to \infty} \frac{\mu_0(\cball{x_{r_{n}}}{r_{n}})}{\mu_0(\cball{0}{r_{n}})} \leq \liminf_{n \to \infty} \exp\left(-\min_{h \in E \cap \cball{x_{r_{n}}}{r_{n}}}\frac{1}{2} \|h\|_{E}^{2}\right).
	\end{equation*}
	It then follows from \cref{cor:Gaussian_measure_M-prop} that $(x_{r_n})_{n \in \Naturals}$ has a further subsequence converging to some point $x^{\star} \in E$.
	In particular, if $(x_{r_n})_{n \in \Naturals}$ is a convergent sequence, then the limit must lie in $E$.
\end{proof}

As discussed, \citet[Theorem~2.8]{KlebanovWacker2022} proved that a limit point of an AMF for the posterior is a strong mode in the sequence-space setting. 
The following lemma generalises this result to any separable Banach space and slightly weakens the hypotheses required on the potential to be merely continuous rather than locally Lipschitz.

\begin{lemma}[Limit points of AMFs are strong modes]
	\label{lem:AMF_limit_point_is_strong}
	Under the assumptions of \cref{thm:main}, any $X$-strong limit point $x^{\star} \in E$ of an AMF $(x_r)_{r > 0} \subset X$ for $\mu^y$ is a strong mode.
\end{lemma}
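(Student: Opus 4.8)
My plan is to reduce the strong-mode property to a subsequence argument. What must be shown is $\lim_{r \to 0} \crcdfm{\mu^y}{x^{\star}}{r}/M_r = 1$, and since $\crcdfm{\mu^y}{x^{\star}}{r} \le M_r$ by the definition of $M_r$, this ratio never exceeds $1$; I only need a matching lower bound. The obstruction is that $x^{\star}$ is merely a limit point of the AMF, so $x_{r_n} \to x^{\star}$ only along some sequence of radii $r_n \to 0$, whereas \Cref{def:mode} demands control as $r \to 0$ through all values. The device that unlocks this is the fact that \emph{every} limit point of the AMF, not just $x^{\star}$, minimises the posterior OM functional $I^y(u) = \Phi(u) + \tfrac12 \norm{u}_E^2$; I would establish this first.

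To prove that preliminary fact, let $\tilde x \in E$ be any limit point, realised as $x_{r_n} \to \tilde x$ in $X$ with $r_n \to 0$ (the limit lies in $E$ by \Cref{lem:AMF_has_limit_point}). Applying \Cref{cor:Gaussian_measure_limits} to the Gaussian prior $\mu_0$ and transferring to $\mu^y$ through the continuity of $\Phi$ — the values of $\Phi$ on the shrinking balls $\cball{x_{r_n}}{r_n}$ and $\cball{\tilde x}{r_n}$ both converge to $\Phi(\tilde x)$, exactly as in \eqref{eq:posterior_ratio} — gives, after passing to a subsequence, $\limsup_n \crcdfm{\mu^y}{x_{r_n}}{r_n}/\crcdfm{\mu^y}{\tilde x}{r_n} \le 1$. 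For any comparison point $z \in E$ I would then factor
\begin{equation*}
	\frac{\crcdfm{\mu^y}{x_{r_n}}{r_n}}{\crcdfm{\mu^y}{z}{r_n}} = \frac{\crcdfm{\mu^y}{x_{r_n}}{r_n}}{\crcdfm{\mu^y}{\tilde x}{r_n}} \cdot \frac{\crcdfm{\mu^y}{\tilde x}{r_n}}{\crcdfm{\mu^y}{z}{r_n}},
\end{equation*}
where the second factor tends to $\exp(I^y(z) - I^y(\tilde x))$ by \Cref{def:OM_functional}, while the left-hand side is at least $1 - \varepsilon(r_n) \to 1$ because $\crcdfm{\mu^y}{x_{r_n}}{r_n} \ge (1 - \varepsilon(r_n)) M_{r_n} \ge (1 - \varepsilon(r_n)) \crcdfm{\mu^y}{z}{r_n}$ by the AMF property. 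Taking limits yields $I^y(\tilde x) \le I^y(z)$ for all $z \in E$, so $\tilde x$ minimises $I^y$ and is thus a weak mode by \Cref{prop:OM_minimisers_are_modes}. In particular $x^{\star}$ itself minimises $I^y$.

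With this in hand I would prove the strong-mode property by the subsequence principle: since the ratio is bounded by $1$, it suffices to show that every sequence $s_m \to 0$ admits a subsequence $(s_{m_j})$ along which $\crcdfm{\mu^y}{x^{\star}}{s_{m_j}}/M_{s_{m_j}} \to 1$. Given such a sequence, the AMF points $(x_{s_m})$ are bounded and, as shown in the proof of \Cref{lem:AMF_has_limit_point}, admit a subsequence $x_{s_{m_j}} \to \tilde x \in E$ for some limit point $\tilde x$. The same continuity-plus-\Cref{cor:Gaussian_measure_limits} argument shows that $\tilde x$ captures the supremal mass, $\crcdfm{\mu^y}{\tilde x}{s_{m_j}}/M_{s_{m_j}} \to 1$; meanwhile $x^{\star}$ and $\tilde x$ are both minimisers of $I^y$ lying in $E$, so \Cref{def:OM_functional} supplies the full limit $\crcdfm{\mu^y}{x^{\star}}{s_{m_j}}/\crcdfm{\mu^y}{\tilde x}{s_{m_j}} \to \exp(I^y(\tilde x) - I^y(x^{\star})) = 1$. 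Multiplying the two limits transfers the mass-capturing property from $\tilde x$ back to the fixed point $x^{\star}$, giving $\crcdfm{\mu^y}{x^{\star}}{s_{m_j}}/M_{s_{m_j}} \to 1$ as required; note that $x^{\star} \in E \subseteq \supp(\mu^y)$, so all ratios are well defined.

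The main obstacle is exactly the mismatch between the subsequential nature of a limit point and the full limit demanded by the strong-mode definition: one limit point controls $M_r$ only along a single sequence of radii. The resolution rests on the observation that $I^y$ takes the same value at every minimiser, so the OM functional lets me swap whichever limit point $\tilde x$ happens to carry the mass along a given radius subsequence for the one fixed point $x^{\star}$, at asymptotic cost $1$. I expect the only delicate routine check to be the locally uniform control of $\Phi$ on the shrinking balls needed to pass \Cref{cor:Gaussian_measure_limits} from the prior to the posterior, which is immediate from continuity.
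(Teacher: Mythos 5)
Your proposal is correct and follows essentially the same route as the paper's own proof: a limit point $\tilde x \in E$ extracted via \Cref{lem:AMF_has_limit_point}, asymptotic mass-capture along subsequences via \Cref{cor:Gaussian_measure_limits} combined with the continuity of $\Phi$ (the bound \eqref{eq:posterior_mass_ratio}), the existence of the full OM-functional limit to bridge between different radius sequences, and the subsequence principle to upgrade to the full limit $\lim_{r \to 0} \crcdfm{\mu^y}{x^{\star}}{r}/M_r = 1$. The only organisational difference is how the equality $I^y(x^{\star}) = I^y(\tilde x)$ is obtained — you show every limit point of the AMF globally minimises $I^y$ by comparing against arbitrary $z \in E$, whereas the paper squeezes the single ratio $\crcdfm{\mu^y}{x^{\star}}{r}/\crcdfm{\mu^y}{y^{\star}}{r}$ between the two radius sequences — but both devices rest on exactly the same fact, so this is a cosmetic repackaging rather than a different argument.
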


\begin{proof}
	Let $x^{\star} \in E$ be some limit point of $(x_{r})_{r > 0}$. 
	To show that $x^{\star}$ is a strong mode, it suffices to check that for any $(r_{n})_{n \in \Naturals} \to 0$,
	\begin{equation} \label{eq:strong_mode_along_subsequences}
		\lim_{n \to \infty} \frac{\mu^{y}(\cball{x^{\star}}{r_{n}})}{M_{r_{n}}} \geq 1.
	\end{equation}
	Indeed, it would be enough to show that any $(r_{n})_{n \in \Naturals} \to 0$ has a further subsequence such that \eqref{eq:strong_mode_along_subsequences} holds along that subsequence:
	this follows from the fact that if $(u_n)_{n \in \Naturals}$ is an arbitrary real sequence and any subsequence of $(u_n)_{n \in \Naturals}$ has a further subsequence converging to $u$, then $u_n \to u$.

	Hence, take any sequence $(r_{n})_{n \in \Naturals} \to 0$; by \cref{lem:AMF_has_limit_point} we may pass to a subsequence of $(r_{n})_{n \in \Naturals}$, which will not be relabelled, such that $(x_{r_n})_{n \in \Naturals}$ converges to some $y^{\star} \in E$.
	As $\Phi$ is continuous, for any $\varepsilon > 0$ there exists $\delta > 0$ such that for any $x \in \cball{y^\star}{\delta}$, it follows that $\absval{\Phi(y^{\star}) - \Phi(x)} < \varepsilon$.
	Since $(r_n)_{n \in \Naturals} \to 0$ and $(x_{r_n})_{n \in \Naturals} \to y^{\star}$, there exists $N \in \Naturals$ such that $\absval{r_n} < \tfrac{\delta}{2}$ and $\norm{x_{r_n} - y^{\star}}_X < \tfrac{\delta}{2}$ for $n \geq N$.
	Hence for such $n$ and any $x \in \cball{x_{r_n}}{r_n}$, we have $\Phi(x_{r_n}) - \Phi(x) < 2\varepsilon$.
	This implies that
	\begin{align}
		\notag
		\frac{\mu^y(\cball{x_{r_n}}{r_n})}{\mu^y(\cball{y^{\star}}{r_n})} &= \exp\bigl(\Phi(y^{\star}) - \Phi(x_{r_n})\bigr) \frac{\int_{\cball{x_{r_n}}{r_n}} \exp\bigl(\Phi(x_{r_n}) - \Phi(x)\bigr)\,\mu_0(\rd x)}{\int_{\cball{y^{\star}}{r_n}} \exp\bigl(\Phi(y^{\star}) - \Phi(x)\bigr)\,\mu_0(\rd x)} \\
		\label{eq:posterior_mass_ratio}
		&< \exp(4\varepsilon) \frac{\mu_0(\cball{x_{r_n}}{r_n})}{\mu_0(\cball{y^{\star}}{r_n})}.
	\end{align}
	By \cref{cor:Gaussian_measure_limits}, we may pass to a further subsequence of $(x_{r_n})_{n \in \Naturals}$ without relabelling such that
	\begin{equation*}
		\limsup_{n \to \infty} \frac{\mu_0(\cball{x_{r_n}}{r_n})}{\mu_0(\cball{y^{\star}}{r_n})} \leq 1,
	\end{equation*}
	and thus taking the $\limsup$ in \eqref{eq:posterior_mass_ratio} and infimising over $\varepsilon > 0$ yields
	\begin{equation*}
		\left(\liminf_{n \to \infty} \frac{\mu^y(\cball{y^{\star}}{r_n})}{\mu^y(\cball{x_{r_n}}{r_n})}\right)^{-1} = \limsup_{n \to \infty} \frac{\mu^y(\cball{x_{r_{n}}}{r_{n}})}{\mu^y(\cball{y^{\star}}{r_{n}})} \leq \limsup_{n \to \infty} \frac{\mu_0(\cball{x_{r_{n}}}{r_{n}})}{\mu_0(\cball{y^{\star}}{r_{n}})} \leq 1.
	\end{equation*}
	Since $(x_{r_{n}})_{n \in \Naturals}$ is a subsequence of an AMF, the previous equation implies that
	\begin{equation*}
		1 \leq \lim_{n \to \infty} \frac{\mu^y(\cball{x_{r_{n}}}{r_{n}})}{M_{r_{n}}} \liminf_{n \to \infty} \frac{\mu^y(\cball{y^{\star}}{r_{n}})}{\mu^y(\cball{x_{r_{n}}}{r_{n}})} = \lim_{n \to \infty} \frac{\mu^y(\cball{y^{\star}}{r_{n}})}{M_{r_{n}}}  \leq 1.
	\end{equation*}
	In particular, the point $x^{\star}$ fixed at the start of the proof is a limit point of the AMF $(x_r)_{r > 0}$, i.e.\ there exists $(s_n)_{n \in \Naturals} \to 0$ such that $(x_{s_n})_{n \in \Naturals} \to x^{\star}$, so the above argument implies the existence of a subsequence such that
	\begin{equation} \label{eq:x_star_maximises_along_sn}
		\lim_{n \to \infty} \frac{\mu^y(\cball{x^{\star}}{s_n})}{M_{s_n}} = 1.
	\end{equation}
	This does not yet hold for \emph{every} sequence $(r_n)_{n \in \Naturals} \to 0$, only the specific sequence $(s_n)_{n \in \Naturals}$.
	To complete the proof, fix an arbitrary $(r_n)_{n \in \Naturals} \to 0$ and $y^{\star} \in E$ as above.
	As $\mu^y$ has an OM functional $I^y$ defined on $E$,
	\begin{equation} \label{eq:OM_functional_rn_sn}
		\lim_{n \to \infty} \frac{\mu^{y}(\cball{x^{\star}}{r_n})}{\mu^{y}(\cball{y^{\star}}{r_n})} = 
		\lim_{n \to \infty} \frac{\mu^{y}(\cball{x^{\star}}{s_n})}{\mu^{y}(\cball{y^{\star}}{s_n})} = 
		\lim_{r \to 0} \frac{\mu^y(\cball{x^{\star}}{r})}{\mu^y(\cball{y^{\star}}{r})} = I^y(y^{\star}) - I^y(x^{\star}).
	\end{equation}
	Hence, using \eqref{eq:x_star_maximises_along_sn} and \eqref{eq:OM_functional_rn_sn}, it follows that
	\begin{align*}
		\lim_{n \to \infty} \frac{\mu^{y}(\cball{x^{\star}}{r_n})}{M_{r_n}} 
		&= \lim_{n \to \infty} \frac{\mu^{y}(\cball{y^{\star}}{r_n})}{M_{r_n}} \lim_{n \to \infty} \frac{\mu^{y}(\cball{x^{\star}}{r_n})}{\mu^{y}(\cball{y^{\star}}{r_n})} \\
		&= \lim_{n \to \infty} \frac{\mu^{y}(\cball{x^{\star}}{s_n})}{\mu^{y}(\cball{y^{\star}}{s_n})}
		\geq \lim_{n \to \infty} \frac{\mu^{y}(\cball{x^{\star}}{s_n})}{M_{s_n}} = 1.
	\end{align*}
	Hence, for any $(r_n)_{n \in \Naturals} \to 0$, there is a further subsequence for which \eqref{eq:strong_mode_along_subsequences} holds, and thus $x^{\star}$ is a strong mode.
\end{proof}

\begin{proof}[Proof of \cref{thm:main}]
	\begin{enumerate}[label=(\alph*)]
		\item
		By \cref{lem:AMF_has_limit_point}, any AMF $(x_r)_{r > 0}$ has an $X$-strong limit point $x^{\star} \in E$, and this limit point is a strong mode by \cref{lem:AMF_limit_point_is_strong}.

		\item
		\Cref{lem:all_weak_or_strong} proves that strong and weak modes coincide as a strong mode exists, and \cref{prop:OM_minimisers_are_modes} shows that weak modes coincide with minimisers of the OM functional.
		As any generalised strong mode must be the limit point of an AMF (\Cref{prop:generalised_strong_AMF}) and \cref{lem:AMF_has_limit_point} implies that such a point lies in $E$, \cref{lem:AMF_limit_point_is_strong} implies that the generalised strong mode is also a strong mode.
		\qedhere
	\end{enumerate}
\end{proof}

\section{Consistency of MAP estimators}
\label{sec:consistency}

We return to the additive-noise Bayesian inverse problem \eqref{eq:additive-noise_BIP} discussed in the introduction.
Suppose that $X$ is a separable Banach space, $Y \defeq \Reals^{d}$ and $\mathcal{G} \colon X \to Y$.
For simplicity, we restrict attention to the case of mean-zero Gaussian noise $\xi$ and Gaussian prior $\mu_{0}$, giving the model
\begin{equation} \label{eq:additive-noise_BIP_Gaussian_noise}
	y = \mathcal{G}(x) + \xi,~~~\xi \sim N(0, \Sigma),~~~x \sim \mu_{0}.
\end{equation}
Under the frequentist assumption that there is a fixed true parameter $x^{\dagger} \in X$, consistency theory studies the behaviour of the posterior and point estimators --- which depend on the random observations $y_{1}, \dots, y_{N}$ --- in the infinite-data or small-noise limit.

Classically, a sequence of posterior measures is consistent at $x^{\dagger}$ if, for any neighbourhood $U$ of $x^{\dagger}$, the posterior measure of $U^{C}$ converges to zero in probability \citep{GhosalVanDerVaart2017}.
For Bayesian inverse problems, this notion is often too restrictive:
the parameter $x^{\dagger}$ need not even be identifiable from the model, because there may exist $x \neq x^{\dagger}$ such that $\mathcal{G}(x) = \mathcal{G}(x^{\dagger})$.
Indeed, if $\mathcal{G}$ is a bounded linear operator, then $\mathcal{G}(x^{\dagger}) = \mathcal{G}(x^{\dagger} + z)$ for any $z \in \ker \mathcal{G} \neq \emptyset$, so $x^{\dagger}$ is never identifiable, and thus one cannot expect posterior consistency to hold.
Posterior consistency is often possible to show if $\mathcal{G}$ is known more explicitly, e.g.\ if it is the solution operator for a partial differential equation \citep[see][]{KnapikVanDerVaartVanZanten2011, AgapiouLarssonStuart2013, Vollmer2013}, but we focus on the general case of a possibly nonlinear operator $\mathcal{G} \colon X \to Y$.

In a similar vein, one cannot expect a sequence of MAP estimators to be consistent estimators of $x^{\dagger}$, i.e.\ the MAP estimators need not converge in probability to $x^{\dagger}$.
It is instead typical to study a weaker notion of consistency for MAP estimators, where one identifies a limit point $x^{\star}$ of any sequence of MAP estimators and shows that $\mathcal{G}(x^{\dagger}) = \mathcal{G}(x^{\star})$ \citep{DashtiLawStuartVoss2013,Dunlop2019,AgapiouBurgerDashtiHelin2018}.
The results of \citet[Section~4]{DashtiLawStuartVoss2013} on the consistency of MAP estimators in the setting of \eqref{eq:additive-noise_BIP_Gaussian_noise} depend on the correspondence between strong MAP estimators and OM minimisers, and on the existence of strong MAP estimators.
Thus, \cref{thm:main} can be used to extend the applicability of these consistency results from separable Hilbert spaces to arbitrary separable Banach spaces.

\paragraph{Infinite-data limit.}
Assume that data $y_{1}, \dots, y_{N}, \dots$ are repeated observations from the model \eqref{eq:additive-noise_BIP_Gaussian_noise} assuming the fixed parameter $x^{\dagger}$, i.e.\ 
\begin{equation} \label{eq:observation_model_infinite-data_limit}
	y_{n} = \mathcal{G}(x^{\dagger}) + \xi_{n},~~~\xi_{n} \overset{\text{i.i.d.}}{\sim} N(0, \Sigma).
\end{equation}
Using Bayes' rule (\cref{thm:Bayes}) with the concatenated observation vector $\bm{y} = (y_{1}, \dots, y_{N}) \in \Reals^{Nd}$ and the appropriate product noise distribution, we see that the posterior  is absolutely continuous with respect to $\mu_{0}$ and has density
\begin{equation} \label{eq:posterior_Gaussian_noise}
	\frac{\rd \mu^{y_{1}, \dots, y_{N}}}{\rd \mu_{0}}(x) \propto \exp\bigl(-\Phi^{y_{1}, \dots, y_{N}}(x)\bigr),~~~~\Phi^{y_{1}, \dots, y_{N}}(x) = \frac{1}{2} \sum_{n = 1}^{N} \Norm{\mathcal{G}(x) - y_{n}}_{\Sigma}^{2},
\end{equation}
with $\Norm{\quark}_{\Sigma} \defeq \norm{\Sigma^{-1/2} \quark}$, analogous to the potential derived in \eqref{eq:Gaussian-noise_BIP_potential}.
As discussed in \cref{sec:BIPs}, the normalisation constant for the density can be absorbed into $\Phi^{y_{1}, \dots, y_{N}}$ by adding a constant depending on $y_{1}, \dots, y_{N}$ to the potential.
Assuming that $\mathcal{G}$ is continuous, the conclusions of \cref{thm:main} hold because $\Phi^{y_{1}, \dots, y_{N}}$ is continuous and bounded below, so for each $N$ there exists at least one strong MAP estimator $x_{N}$ of $\mu^{y_{1}, \dots, y_{N}}$, which can be obtained by minimising the OM functional
\begin{equation*}
	I^{y_{1}, \dots, y_{N}}(x) = \frac{1}{2} \norm{x}_{E}^{2} + \Phi^{y_{1}, \dots, y_{N}}(x)
\end{equation*}
over all $x \in E$.
Following the proof of \citet[Theorem~4.1 and Corollary~4.3]{DashtiLawStuartVoss2013}, we obtain a weak consistency result for MAP estimators which applies more generally in any separable Banach space with any continuous observation operator $\mathcal{G}$.

\begin{theorem}
	\label{thm:consistency_infinite-data_limit}

	Let $X$ be a separable Banach space and let $\mathcal{G} \colon X \to \Reals^{d}$ be continuous. 
	Let $x^{\dagger} \in X$ be arbitrary.
	Suppose that the data $y_{1}, \dots, y_{N}, \dots$ are generated from the observation model \eqref{eq:observation_model_infinite-data_limit} and $\mu^{y_{1}, \dots, y_{N}}$ is the corresponding posterior. 
	For each $N \in \Naturals$, let $x_{N}$ be any strong MAP estimator of $\mu^{y_{1}, \dots, y_{N}}$.
	Then:
	\begin{enumerate}[label=(\alph*)]
		\item there is a subsequence of $(\mathcal{G}(x_{N}))_{N \in \Naturals}$ converging to $\mathcal{G}(x^{\dagger})$ almost surely; 
		\item if $x^{\dagger} \in E$, there is a subsequence of $(x_{N})_{N \in \Naturals}$ converging to some $x^{\star} \in E$ weakly in $E$ almost surely, and $\mathcal{G}(x^{\star}) = \mathcal{G}(x^{\dagger})$.
	\end{enumerate}
\end{theorem}

Since the proof is a straightforward adaptation of that of \citet{DashtiLawStuartVoss2013}, it is omitted.

\paragraph{Small-noise limit.}
Assume that $y_{1}, \dots, y_{N}, \dots$ are observations from the sequence of models
\begin{equation} \label{eq:observation_model_small-noise_limit}
	y_{n} = \mathcal{G}(x^{\dagger}) + \frac{1}{n} \xi_{n},~~~\xi_{n} \overset{\text{i.i.d.}}{\sim} N(0, \Sigma).
\end{equation}
Unlike in the infinite-data scenario, we consider the sequence of posteriors $\mu^{y_{N}}$ obtained from just a single observation;
again applying Bayes' rule we obtain that $\mu^{y_{N}}$ is absolutely continuous with respect to $\mu_{0}$ and has density
\begin{equation*}
	\frac{\rd \mu^{y_{N}}}{\rd \mu_{0}}(x) \propto \exp\bigl(-\Phi^{y_{N}}(x)\bigr),~~~~\Phi^{y_{N}}(x) = \frac{N^{2}}{2} \norm{\mathcal{G}(x) - y_{N}}_{\Sigma}^{2}.
\end{equation*}
As before, the potential satisfies the hypotheses of \cref{thm:main}, and so at least one strong MAP estimator exists for each posterior $\mu^{y_{N}}$.
Following the proof of \citet[Theorem~4.4 and Corollary~4.5]{DashtiLawStuartVoss2013}, we also obtain weak consistency for MAP estimators in the small-noise setting.

\begin{theorem}
	\label{thm:consistency_small-noise_limit}

	Let $X$ be a separable Banach space and let $\mathcal{G} \colon X \to \Reals^{d}$ be continuous. 
	Let $x^{\dagger} \in X$ be arbitrary.
	Suppose that the data $y_{1}, \dots, y_{N}, \dots$ are generated from the observation model \eqref{eq:observation_model_small-noise_limit} and $\mu^{y_{N}}$ is the corresponding posterior. 
	For each $N \in \Naturals$, let $x_{N}$ be any strong MAP estimator of $\mu^{y_{N}}$.
	Then:
	\begin{enumerate}[label=(\alph*)]
		\item there is a subsequence of $(\mathcal{G}(x_{N}))_{N \in \Naturals}$ converging to $\mathcal{G}(x^{\dagger})$ almost surely;
		\item if $x^{\dagger} \in E$, there is a subsequence of $(x_{N})_{N \in \Naturals}$ converging to some $x^{\star} \in E$ weakly in $E$ almost surely, and $\mathcal{G}(x^{\star}) = \mathcal{G}(x^{\dagger})$.
	\end{enumerate}
\end{theorem}

\section{Closing remarks}
\label{sec:conclusion}

MAP estimators provide a simple summary of the posterior distribution, but in the nonparametric setting it is not straightforward even to verify that MAP estimators exist.
This article has shown that Bayesian inverse problems defined on any separable Banach space with a Gaussian prior have well-defined strong MAP estimators under very mild conditions on the forward problem.
The fact that MAP estimators correspond with minimisers of a Tikhonov functional is an important justification for the Bayesian approach, and this article has also extended the connection between MAP estimators and variational minimisers to the Banach setting.
As a corollary of \cref{thm:main} on the existence of strong MAP estimators, this article also extends results on the consistency of MAP estimators to additive-noise Bayesian inverse problems set in any separable Banach space.

The strategy adopted here depends on two essential points:
the statistical structure of the Bayesian inverse problem \eqref{eq:additive-noise_BIP}, which ensures the posterior is absolutely continuous with respect to the prior, and the topological structure provided by the Gaussian prior through the compactly embedded Cameron--Martin space.

In more general settings, such as those where the observed quantity has infinite dimension, it need not be the case that the posterior is absolutely continuous with respect to the prior \citep[Remark~3.8]{Stuart2010}.
While the small-ball theory for modes (\Cref{subsection:modes}) does not depend on this absolute continuity, new techniques are needed to translate statements from prior to posterior without a density to relate the two.

Though this article has restricted attention to the case that $X$ is a separable Banach space, the results on Gaussian measures used in this article hold more generally for Radon Gaussian measures on a locally convex space $X$, and this would form a natural extension of this work.

Another possible extension is to other priors with similar structure used in nonparametric Bayesian inverse problems, such as the $p$-exponential priors of \citet{AgapiouDashtiHelin2021}.
As pointed out by \citet[Remark~2.14]{AgapiouDashtiHelin2021}, a bound analogous to \cref{prop:CM_bound} is more challenging for non-Gaussian $p$-exponential measures (i.e.\ $p < 2$) because the appropriate analogue of the Cameron--Martin space is not a Hilbert space;
on the other hand, $p$-exponential measures are defined on subspaces of the countable product space $\Reals^{\infty}$, which provides a useful topological structure not present in an arbitrary Banach space.

It would also be interesting to know whether the hypothesis of coercivity in \cref{prop:weak_modes} --- which was sufficient to prove the existence of weak modes --- would also suffice for proving the existence of strong modes.

\section*{Acknowledgements}
\addcontentsline{toc}{section}{Acknowledgements}

The author thanks Ilja Klebanov and Tim Sullivan for helpful feedback and comments.%

The author is supported by the Warwick Mathematics Institute Centre for Doctoral Training and gratefully acknowledges funding from the University of Warwick and the UK Engineering and Physical Sciences Research Council (Grant number: EP/W524645/1).
% If submitting under UKRI open access route 2 (depositing author accepted manuscript in repository)
For the purpose of open access, the author has applied a Creative Commons Attribution (CC BY) licence to any Author Accepted Manuscript version arising.

\section*{Data availability statement}
\addcontentsline{toc}{section}{Data availability statement}
 No new data were created or analysed in this study.

\bibliographystyle{abbrvnat}
\bibliography{references}
\addcontentsline{toc}{section}{References}

\end{document}